\def\R{{\mathbb R}}
\def\S{{\mathbb S}}
\def\TR{{\rm tr}}
\def\V{{\rm vec}}
\colorlet{BLUE}{blue}
\crefname{hypothesis}{Hypothesis}{Hypotheses}
\title{Linear programming on the Stiefel manifold\thanks{Accepted by SIAM J. Optim.
\funding{This research was supported by the National Natural Science Foundation of China (Grant Nos. 12171021 and 11822103), and the Fundamental Research Funds for the Central Universities. The first author was also supported by the Academic Excellence Foundation of BUAA for PhD Students, and international joint doctoral education fund of Beihang University.}}}
\author{Mengmeng Song\thanks{School of Mathematical Sciences, Beihang University, Beijing 100191, People's Republic of China, 
\{songmengmeng, yxia\}@buaa.edu.cn.}
\and Yong Xia\footnotemark[2]
\thanks{Corresponding author. }
}
\begin{document}

\maketitle

\begin{abstract}
Linear programming on the Stiefel manifold (LPS) is studied for the first time. It aims at minimizing a linear objective function over the set of all $p$-tuples of orthonormal vectors in $\R^n$ satisfying $k$ additional linear constraints.
	Despite the classical polynomial-time solvable case $k=0$, general (LPS) is NP-hard.
	According to the Shapiro-Barvinok-Pataki theorem, (LPS) admits an exact semidefinite programming (SDP) relaxation when $p(p+1)/2\le n-k$, which is tight when $p=1$. Surprisingly, we can greatly strengthen this sufficient exactness condition to
	$p\le n-k$, which covers the classical case $p\le n$ and $k=0$. Regarding (LPS) as a smooth nonlinear programming problem, we reveal a nice property that under the linear independence constraint qualification, the standard first- and  second-order {\it local} necessary optimality conditions are sufficient for {\it global} optimality when $p+1\le n-k$.
\end{abstract}

\begin{keywords}
{Linear programming, \and Stiefel manifold, \and  Quadratically constrained quadratic optimization, \and Semidefinite programming, \and Optimality conditions}
\end{keywords}

\begin{AMS}
  90C26, 90C22, 90C46,  90C20
\end{AMS}

\section{Introduction}
The Stiefel manifold  \cite{S35} is the set of all $p$-tuples of orthonormal vectors in $\R^n$ where $p\in\{1,\cdots,n\}$. We denote it by
\[
St_{n,p}=\{X\in\R^{n\times p}:~X^TX=I_p\},
\]
where $I_p$ is the identity matrix of order $p$. The two extreme cases $St_{n,1}$ and $St_{n,n}$ correspond to the unit sphere  and ``orthogonal group'', respectively. $St_{n,p}$ is connected if $p<n$, and has two connected components if $p=n$ \cite{Ra02}.

The fundamental linear optimization on the Stiefel manifold \cite{A09} reads as
\begin{equation}\label{LSM0}\tag{LS}
	\min_{X\in St_{n,p}} \TR(A_0X),
\end{equation}
where $A_0\in\R^{p\times n}$ and $\TR(\cdot)$ is the trace of matrix $(\cdot)$. \eqref{LSM0} can be globally solved by the singular value decomposition \cite{W02}, and the optimal value is equal to minus the sum of all singular values of $A_0$. 
In fact, \eqref{LSM0} dates back to von Neumann's trace inequality \cite{Neumann1937}. Furthermore, \eqref{LSM0} is equivalent to the minimization over the convex hull of $St_{n,p}$, which can be exactly characterized by linear matrix inequalities \cite{X14}.
Problem \eqref{LSM0} has an application in the projection onto $St_{n,p}$ \cite{Green1952,M02,Sc66}, more precisely,
\begin{equation}\label{qua-lin}
	\arg\min_{X\in St_{n,p}} \|A_0^T-X\|_F^2=\arg\min_{X\in St_{n,p}} \TR(-A_0X),
\end{equation}
where $\|\cdot\|_F$ is the Frobenius norm.

In this paper, we consider the following linear programming on the Stiefel manifold:
\begin{equation}\label{eq:LP_s}\tag{LPS}
	\begin{array}{cl}
		\min &\TR(A_0X)\\
		{\rm s.t.}&X\in E_{n,p,k}\triangleq\{X\in St_{n,p}:~ a_i^l \le\TR(A_iX)\le a_i^u,~i=1, \cdots, k\},
	\end{array}
\end{equation}
where $A_i\in\R^{p\times n}$, $a_i^l\in\R\cup\{-\infty\},$ and $a_i^u\in\R\cup\{+\infty\}$ for $i=1, \cdots, k$. We assume $E_{n,p,k}\neq\emptyset$.  
\eqref{eq:LP_s} encompasses several important problem instances, namely binary linear programming (Example~\ref{exam:1}), Hadamard conjecture (Example~\ref{exam2}), and linear sum assignment problem (Example~\ref{ex:linear_sum}), as special cases. Notably, the first one highlights NP-hardness of general \eqref{eq:LP_s}. \eqref{eq:LP_s} also has applications in a class of minimax Stiefel manifold optimization problems (Example~\ref{exam:6}). Moreover, \eqref{eq:LP_s} serves as the crucial subproblem in each iteration of both extended sequential linear programming (Section~\ref{subsec:2}) and extended Kelley's cutting plane method (Section~\ref{subsec:3}).
These two methods are utilized to address the constrained optimization problem under an additional Stiefel manifold constraint.

Note that \eqref{eq:LP_s} is not a manifold optimization problem except some special cases, such as \eqref{LSM0}.
The feasible set of \eqref{eq:LP_s}, $E_{n,p,k}$, may not be a manifold, even without inequality constraints and with only one equality constraint, see the discussion in Section \ref{se:5}.
Problem \eqref{eq:LP_s} is the linear programming problem on the Stiefel manifold, thus is a special class of constrained manifold optimization (CMO).
CMO refers to the constrained optimization problem with an additional manifold constraint, see \cite{Liu20,Weber23,Zhang14}.
Globally solving methods of CMO usually assume geodesically convexity of the feasible set, for example, \cite{Weber23}. However, even the simple case $E_{n,1,1}$, which corresponds to the intersection of a unit sphere and a linear inequality constraint, is not geodesically convex if it contains more than a half of the sphere but not the entire sphere \cite{Ud94}.
Other CMO-solving methods that can be applied to \eqref{eq:LP_s} cannot gurantee convergence to the globally optimal solution, such as the augmented Lagrangian method and the exact penalty method presented by Liu and Boumal  \cite{Liu20}.
If we regard \eqref{eq:LP_s} as an nonlinear programming (NLP) problem, it is also hard to globally solve due to the presence of $p(p+1)/2$ nonconvex quadratic constraints \cite{Luenberger1984}.

Fortunately, the concept of hidden convexity has unveiled novel prospects for globally solving nonconvex problems, attracting significant attention within the research community.
As defined in \cite{X20},
{\it hidden convexity} refers to the property of having a convex reformulation and allowing for a polynomial-time solving method.
Hidden convexity is often analyzed through convex relaxation exactness, strong duality, and optimality conditions, etc.


Based on the representation of $St_{n,p}$, \eqref{eq:LP_s} is a case of quadratic matrix optimization (QMP) problems \cite{Beck06}.
Beck \cite{Beck09} showed that QMP with a limited number of linear  constraints admits an exact semidefinite programming (SDP) relaxation by using the classical Shapiro-Barvinok-Pataki theorem \cite{Barvinok95,Pataki98,Shapiro82}. 
Furthermore, in that case,  the globally optimal solution of QMP can be extracted from that of the SDP relaxation.
It is a direct corollary that \eqref{eq:LP_s} with $p(p+1)/2\le n-k$ is hidden convex. We illustrate in Example \ref{exam:ex1} that this condition is tight when $p=1$ \cite{Beck09}, but fails to hold true in case $p\ge\sqrt{2n}$.
To our surprise, by carefully analyzing the structure of $E_{n,p,k}$, we can greatly strengthen the condition to $p\le n-k$. The new result implies  hidden convexity of \eqref{LSM0} by setting $p\le n$  and $k=0$.

For hidden convex optimization without any local nonglobal minimizer, it could be expected to characterize global optimality based on local information.
It is known that the first- and
second-order local necessary optimality conditions are sufficient for global optimality of
the homogeneous trust-region subproblem \cite{Ra02},
QMP problems arising from high-rank Burer-Monteiro factorizations \cite{BM03} of SDP problems \cite{Boumal2019,Wald20}, and so on.
For hidden convex optimization with at least one local nonglobal minimizer,  local optimality conditions are characterized case by case.
The nonconvex trust-region subproblem has at most one local nonglobal minimizer \cite{Mart94}, at which the first- and second-order sufficient optimality conditions are necessary  \cite{WX20}.
This property is extended to the homogeneous quadratic optimization with two quadratic constraints \cite{Song23} and the problem jointing trust-region subproblem with convex optimization \cite{S22}.

Our second contribution in this paper is to show that
under the linear independence constraint qualification (LICQ), the first- and  second-order  {\it local} necessary optimality conditions are sufficient for the {\it global} optimality  of \eqref{eq:LP_s}  when $p+1\le n-k$.

The remainder of this paper is organized as follows.
Section 2 gives plenty of applications of \eqref{eq:LP_s}.
Section 3 obtains an SDP relaxation of \eqref{eq:LP_s} via two approaches and an exactness result inheriting from that of QMP.
Section 4  establishes a significantly strengthened sufficient condition for the exactness of the same SDP relaxation.  Section 5 characterizes global optimality by the standard first- and second-order necessary local optimality conditions. Conclusions and open questions are presented in Section 6.

\noindent {\bf Notation.}
For an optimization problem $(P)$, denote its feasible region and optimal value by $\mathcal{F}(P)$ and  $v(P)$, respectively. The extreme points  of a given set $\Omega$ form the set $\mathcal{E}(\Omega)$.
Denote by $\S^{n}$ the set of symmetric matrices with dimension $n\times n$. For $A\in\S^{n}$,  $A\succeq (\preceq)\ 0$  means that $A$ is positive (negative) semidefinite.
For matrix $X\in\R^{n\times p}$, $X_i$ is its $i$-th column,  $X_{[i_1:i_2][j_1:j_2]}$ is the submatrix formed  by the entries in both $\{i_1,\cdots,i_2\}$-th rows  and $\{j_1,\cdots,j_2\}$-th columns. Notation $X_{[i_1:i_2]}$ is short for $X_{[i_1:i_2][i_1:i_2]}$. Denote by $\V(X)$ the vector obtained by stacking the columns of $X$ one underneath the
other. The Kronecker product of matrices $A$ and $B$ is denoted by $A\otimes B$.
Denote by $0_n$ the zero matrix with dimension $n\times n$.
Let $E^{ij}$  be the square matrix of a proper dimension with all entries being $0$ except the $i$-th row $j$-th column entry being $1$.
Let $e_i$ be the $i$-th column of the identity matrix $I_n$.
For $\lambda\in\R$, define $\lambda^+:={\rm max}\{0, \lambda\}$ and $\lambda^-:={\rm max}\{0, -\lambda\}$.

\section{Applications}
In this section, we present some applications of \eqref{eq:LP_s}.

\subsection{Classical \eqref{eq:LP_s} cases}\label{subsec:1}
We present classical optimization models that can be reformulated as \eqref{eq:LP_s}.
Our main focus is to investigate the computational complexity of globally solving \eqref{eq:LP_s} and to highlight the practical significance of \eqref{eq:LP_s} through a class of minimax problems.

\begin{example}\label{exam:1}Binary linear programming is a variant of linear programming in which the variables are additionally constrained to be either $-1$ or $1$. It is a classical NP-hard problem \cite{Ausiello99}.
	Let $A\in\R^{m\times n}$, $a \in\R^{n}$ and $ b\in\R^{m}$. The binary linear programming problem
	\begin{equation*}
		\begin{array}{cl}
			\min\limits_{x\in\{-1,1\}^n} &a^Tx\\
			{\rm s.t.}&Ax\le b
		\end{array}
	\end{equation*}
	is equivalent to the  following case of  \eqref{eq:LP_s} with $p=1$ and $k=n+m$:
	\begin{equation*}
		\begin{array}{cl}
			\min\limits_{X\in St_{n,1}} &\sqrt{n}a^TX\\
			{\rm s.t.}&\sqrt{n}AX\le b,\\
			&-\frac{1}{\sqrt{n}}\le X_i\le \frac{1}{\sqrt{n}},~i=1, \cdots, n,
		\end{array}
	\end{equation*}
	since
	\begin{equation}\label{reform:0}
		\{-1,1\}^n=\left\{\sqrt{n}X:~-\frac{1}{\sqrt{n}}\le X_i\le \frac{1}{\sqrt{n}},~i=1, \cdots, n,~X\in St_{n,1}\right\}.
	\end{equation}
\end{example}
The NP-hardness of the binary linear programming \cite{Ausiello99} in Example \ref{exam:1} already implies that of
\eqref{eq:LP_s}. For completeness, we also present a proof in the Appendix.
\begin{proposition}\label{pro:nphard}
General \eqref{eq:LP_s} is NP-hard.
\end{proposition}

Hadamard conjecture is another \eqref{eq:LP_s} case which suggests that verifying the feasibility of some cases of \eqref{eq:LP_s} is already challenging. 
\begin{example}\label{exam2}
	A Hadamard matrix is a square matrix with entries of either $+1$ or $-1$, and its rows are mutually orthogonal.
It has abundant applications in design theory, binary codes and so on, see \cite{HW78}.
The longstanding Hadamard conjecture focuses on the existence of Hadamard matrices for $n=1$, $n=2$, and $n$ being divisible by $4$, see \cite{YH97}.
	The existence of a Hadamard matrix with dimension $n$ amounts to the solvability of the following case of \eqref{eq:LP_s} with $p=n$ and $k=n^2$:
	\begin{equation*}
		\begin{array}{cl}
			\min\limits_{X\in St_{n,n}}& \TR(0_n X)\\
			{\rm s.t.} & -\frac{1}{\sqrt{n}} \le X_{ij}\le \frac{1}{\sqrt{n}},~i,j=1, \cdots, n,
		\end{array}
	\end{equation*}
for a similar reason as the equivalence \eqref{reform:0}. It is worth noting that the smallest order for which no Hadamard matrix is presently known is $668$  \cite{SWW05}.
\end{example}

There is also a classical \eqref{eq:LP_s} case which is easy-solving even with a large number of linear constraints.
\begin{example}\label{ex:linear_sum}
Linear sum assignment problem \cite{Burkard1999} is a fundamental combinatorial optimization problem that addresses the problem of assigning a set of agents to an equal number of tasks in a bijective manner, with the objective of minimizing the total cost of the assignment. Let $e=(1,\cdots,1)^T\in \R^n$. Its mathematical formulation
	\begin{equation}\label{assign}
		\begin{array}{cl}
			\min&  \TR(A_0X)  \\
		{\rm s.t.}& X\in \Pi_n \triangleq \{X\in \{0,1\}^{n\times n}:~ Xe=X^Te=e\} 
		\end{array}
	\end{equation}
is equivalent to the \eqref{eq:LP_s} case with $p=n$ and $k=n^2$:
\begin{equation*}
	\begin{array}{cl}
		\min\limits_{X\in St_{n,n}}&  \TR(A_0X)    \nonumber\\
		{\rm s.t.}& X_{ij}\ge 0,~i,j=1, \cdots, n, 
	\end{array}
\end{equation*}
since
	\begin{equation}\label{eq:equ}
	\Pi_n=\left\{X\in St_{n,n}: X_{ij}\ge 0,~i,j=1, \cdots, n \right\}.
\end{equation}

	It is further equivalent to the linear programming
	\begin{equation*}
		\begin{array}{cl}
			\min& \TR(A_0X)\\
			{\rm s.t.} & Xe=X^Te=e,\\
			&  X_{ij}\ge 0,~i,j=1, \cdots, n,
		\end{array}
	\end{equation*}
	by the well-known Birkhoff-von Neumann theorem  \cite{Birkhoff1946}.
\end{example}

In addition to aforementioned applications, \eqref{eq:LP_s} can also be applied to a class of minimax Stiefel manifold optimization problems. By utilizing the new result in this paper, we will reveal a new hidden convexity property of this class of optimization problems.
\begin{example}[Minimax Stiefel manifold optimization]\label{exam:6}
	For $M_i\in\R^{p\times n}$ and $c_i\in\R$ ($i=1,\cdots,m$), we consider the minimax problem
	\begin{equation}\label{minmax0}\tag{M-M}
		\min_{X\in E_{n,p,k}} \max \{\TR(M_1X)+c_1,\cdots,\TR(M_mX)+c_m\}.
	\end{equation}
When $k=0$, \eqref{minmax0} is the minimization of a piecewise linear nonsmooth convex function on the Stiefel manifold. This problem is fundamental in nonsmooth Stiefel manifold optimization and has various applications in machine learning \cite{Chen20,Zohrizadeh2019}.
Existing algorithms do not guarantee convergence to the globally optimal solution, such as the retraction-based proximal gradient method introduced by Chen et al. \cite{Chen20} and the sequential penalized relaxation method presented by Zohrizadeh et al. \cite{Zohrizadeh2019}.
	
	Problem \eqref{minmax0} can be equivalently transformed into the minimum of $m$ instances of \eqref{eq:LP_s}:
		\begin{equation*}
		\begin{array}{cl}
			\min\limits_{X\in E_{n,p,k}}& \TR(M_iX)+c_i\\
			{\rm s.t.} & \TR(M_iX)+c_i\ge \TR(M_jX)+c_j,~\forall j\neq i
		\end{array}
	\end{equation*}
	for $i=1,\cdots,m$.
By equivalently writting \eqref{minmax0} as
	 \begin{equation}\label{eq:t}
		\begin{array}{cl}
			\min\limits_{X\in E_{n,p,k},\, t\in\R} &t\\
			 \ {\rm s.t.} & \TR(M_iX)+c_i\le t,~i=1, \cdots, m,
	\end{array}
	\end{equation}
we will reveal hidden convexity of  \eqref{minmax0} similar to \eqref{eq:LP_s} in Section \ref{se:main}.
Here are some applications of \eqref{minmax0}.
\begin{itemize}
\item
Stiefel-constrained weighted maximin dispersion problem aims to find a point in the Stiefel manifold $St_{n,p}$ that maximizes the minimal weighted Euclidean
distance from given $m$ points. It reads as
\begin{equation}\label{eq:disp}
\max_{X\in St_{n,p}} \min_{i=1,\cdots,m}  w_i\|X-D_i\|_F^2
		=-\min_{X\in St_{n,p}} \max_{i=1,\cdots,m} -w_i\|X-D_i\|_F^2,
\end{equation}
	where $D_i\in\R^{n\times p}$, and $w_i>0$ for $i=1,\cdots,m$.
Problem \eqref{eq:disp} can be reformulated as a special case of \eqref{minmax0} similar to \eqref{qua-lin}. Wang and Xia \cite{WX16} studied ball-constrained weighted maximin dispersion problem. Based on their proof, one can easily derive that \eqref{eq:disp} with $p=1$ and $m\le n$ has an exact SDP relaxation. We will extend the result to larger $p$ cases in Section \ref{se:main}.
\item
Orthogonal dictionary learning is a fundamental problem in representation learning \cite{Bai2019,Xue2021}. To recover orthogonal dictionaries, one common approach is to consider the $\ell_1$-norm minimization problem which can be reformulated as a special case of \eqref{minmax0}
\begin{equation}\label{eq:orth}
	\min_{X\in St_{n,p}}\left\{  \|DX\|_1=\sum_{i=1}^m\sum_{j=1}^p  |(DX)_{ij}|=\max_{Y\in \{\pm1\}^{m\times p}} \sum_{i=1}^m\sum_{j=1}^p Y_{ij}(DX)_{ij}
	\right\},
\end{equation}
where $D\in \R^{m\times n}$. The $p=1$ case of \eqref{eq:orth}  is used by Bai et al. \cite{Bai2019} to provide a probable and approximate learned dictionary result.
\item Linear bottleneck assignment problem, introduced by Burkard and Derigs \cite{BD80}, is a variant of the linear assignment problem \eqref{assign}. It can be formulated as
\begin{equation}\label{eq:bottleneck}
	\min_{X\in \Pi_n} \max_{i,j=1,\cdots,n} C_{ij}X_{ij}.
\end{equation}
Problem \eqref{eq:bottleneck} seeks to minimize the maximum cost among all individual assignments, which is particularly useful in job assignment to parallel working machines.
Problem \eqref{eq:bottleneck} can be viewed as an application of \eqref{minmax0} by the equivalence \eqref{eq:equ}. Similar to the linear assignment problem \eqref{assign}, the linear bottleneck assignment problem can be solved efficiently in polynomial time due to its special structure \cite{Burkard2012}.
\end{itemize}
\end{example}

\subsection{Methods of solving CMO with \eqref{eq:LP_s} as the subproblem}
We demonstrate the applicability of \eqref{eq:LP_s} as the vital subproblem in both extended sequential linear programming and extended Kelley's cutting plane method for solving CMO.
\subsubsection{Extended sequential linear programming}\label{subsec:2}
Sequential linear programming (SLP) was developed by Griffith and Stewart \cite{Griffith61} for solving NLP problems. The SLP approach converts an NLP problem into a series of linear programming (LP) problems using the first-order Taylor expansion.

Consider the constrained optimization problem on the Stiefel manifold $St_{n,p}$:
\begin{equation}\label{eq:MC}
	\begin{array}{cl}
		\min\limits_{X\in St_{n,p}}&  f_0(X) \nonumber \\
		{\rm s.t.}& a_i^l\le f_i(X)\le a_i^u,\ i=1,\cdots, k,
	\end{array}
\end{equation}
where $k\ge 0$, $a_i^l\in\R\cup\{-\infty\},\  a_i^u\in\R\cup\{+\infty\}$ for $i=1, \cdots, k$, and $f_i$ is first-order differential for $i=0, 1, \cdots, k$.
To solve \eqref{eq:MC}, we extend SLP from the Euclidean space to the Stiefel manifold $St_{n,p}$. In iteration $t+1$, we solve the
\eqref{eq:LP_s}-subproblem
\begin{equation*}
	\begin{array}{cl}
		\min\limits_{X\in St_{n,p}}&  \nabla f_0(X^t)^T(X-X^t)  \\
		{\rm s.t.}& a_i^l\le \nabla f_i(X^t)^T(X-X^t)+ f_i(X^t)\le a_i^u,\ i=1,\cdots, k,
	\end{array}
\end{equation*}
where $X^t$ is obtained from iteration $t$ and $\nabla f$  represents the gradient of function $f$.
 It is worth noting that the convergence result of the original SLP typically requires an additional trust-region constraint such as
\begin{equation}\label{eq:tru} {\|X-X^t\|}_F^2\le\delta^t
\end{equation}
in each iteration, where $\delta^t>0$ represents the square of the trust-region radius.
For points $X$ and $X^t$ in $St_{n,p}$, \eqref{eq:tru} is equivalent to the linear constraint
$$\TR((X^{t})^TX)\ge p-\delta^t/2.$$
Thus, the involved subproblem remains an \eqref{eq:LP_s} case.

\subsubsection{Extended Kelley's cutting plane method}\label{subsec:3}Kelley's cutting plane method, introduced by Kelley \cite{Kelley1960}, is a classical approach for solving unconstrained convex problems that may be nonsmooth. If there exists a globally optimal point, the problem can be reformulated as a linear minimization problem over a compact convex set, by a similar approach used in \eqref{eq:t}.

In each iteration of Kelley's cutting plane method, a linear minimization problem is solved under the constraints of updated cutting planes. To maintain manageable complexity, nonbinding constraints are often dropped in each step \cite{Luenberger1984}. Kelley's method converges to the optimal solution under mild assumptions \cite{Kelley1960}.

We consider an extension of Kelley's cutting plane method to solve the problem under an additional Stiefel manifold constriant:
\begin{equation}\label{eq:ss}
	\begin{array}{cl}
		\min\limits_{X\in St_{n,p}\cap \Omega}&  \TR(A_0X), 
	\end{array}
\end{equation}
 where $\Omega$ is a closed convex set in $\mathbb{R}^{n\times p}$.
In each iteration, we keep $St_{n,p}$ and update the polyhedron that contains $\Omega$ using the same approach as in the original Kelley's cutting plane method. Consequently, each iteration requires solving a linear programming problem on $St_{n,p}$, which is an instance of \eqref{eq:LP_s}.
The convergence result remains, as the sequence of objective values will monotonically decrease in the same way as the original Kelley's cutting plane method.

A more recent advancement was made by Drori and Teboulle \cite{Drori16}, who presented a variant of Kelley's cutting plane method with an optimal convergence rate. It is unknown whether this optimal  method can be extended to solve \eqref{eq:ss}.

\section{ SDP relaxation}
We first present a direct SDP relaxation for \eqref{eq:LP_s} in Section \ref{subsec3}. In Section \ref{subsec4}, the same relaxation is reobtained based on the QMP reformulation of \eqref{eq:LP_s}. In Section \ref{subsec5}, an exactness result of the SDP relaxation is obtained, inheriting from the relevant result of QMP.
\subsection{The first approach}\label{subsec3}
The nonconvexity of \eqref{eq:LP_s} comes from the quadratic equality constraints due to $St_{n,p}$. We start from the geometric characteristics of $St_{n,p}$.
\begin{lemma}\label{lem:1}{\rm(\cite[Lemma 1]{X14})}
	The convex hull of $St_{n,p}$ is
	\begin{equation}\label{cvxhul}
		\{X\in\R^{n\times p}: ~X^TX\preceq I_p\}.
	\end{equation}
\end{lemma}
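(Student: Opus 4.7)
The plan is to prove the two inclusions separately, exploiting the singular value decomposition (SVD) for the nontrivial direction.

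First, I would show that the candidate set $C:=\{X\in\R^{n\times p}:X^TX\preceq I_p\}$ contains $\mathrm{conv}(St_{n,p})$. This reduces to the two facts that $St_{n,p}\subseteq C$, which is immediate since $X^TX=I_p$ forces $X^TX\preceq I_p$, and that $C$ itself is convex. The convexity of $C$ is most transparently established by observing that $X^TX\preceq I_p$ is equivalent to $\|X\|_2\le 1$, where $\|\cdot\|_2$ is the spectral norm; since $\|\cdot\|_2$ is a norm on $\R^{n\times p}$, its unit ball $C$ is convex. (Alternatively, one can verify convexity directly via the Schur complement $\begin{bmatrix} I_n & X\\ X^T & I_p \end{bmatrix}\succeq 0$, which is linear in $X$.)

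For the reverse inclusion $C\subseteq\mathrm{conv}(St_{n,p})$, I would pick $X\in C$ and form a compact SVD $X=U\bigl[\begin{smallmatrix}D\\ 0\end{smallmatrix}\bigr]V^T$ with $U\in\R^{n\times n}$, $V\in\R^{p\times p}$ orthogonal and $D=\diag(\sigma_1,\dots,\sigma_p)$, where each $\sigma_i\in[0,1]$ because $X^TX\preceq I_p$. Now write $\sigma_i=\tfrac{1+\sigma_i}{2}\cdot 1+\tfrac{1-\sigma_i}{2}\cdot(-1)$ so that $D$ is a convex combination of the $2^p$ diagonal sign matrices $D_\ep=\diag(\ep_1,\dots,\ep_p)$ with $\ep\in\{-1,+1\}^p$. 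Setting $X_\ep:=U\bigl[\begin{smallmatrix}D_\ep\\ 0\end{smallmatrix}\bigr]V^T$, a direct computation gives $X_\ep^TX_\ep=VD_\ep^2V^T=VV^T=I_p$, so $X_\ep\in St_{n,p}$, and the coefficients from the decomposition of $D$ transport to a convex decomposition $X=\sum_\ep \la_\ep X_\ep$.

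The only mildly delicate point is ensuring that the coefficients $\la_\ep\ge 0$ summing to one actually exist; this is the standard observation that a diagonal matrix with entries in $[-1,1]$ lies in the convex hull of the diagonal $\{\pm 1\}$ matrices, which follows by tensorizing the one-dimensional identity above. Beyond that, the argument is routine SVD manipulation, and I do not anticipate a serious obstacle. Both inclusions together yield $\mathrm{conv}(St_{n,p})=C$, as required.
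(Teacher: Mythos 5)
Your argument is correct and complete: the forward inclusion follows from convexity of the spectral-norm unit ball, and the reverse inclusion via the SVD $X=U\bigl[\begin{smallmatrix}D\\ 0\end{smallmatrix}\bigr]V^T$ together with the decomposition of $D$ into the $2^p$ diagonal sign matrices (whose images $X_\ep$ lie in $St_{n,p}$) is exactly the standard proof. The paper itself gives no proof and simply cites \cite[Lemma 1]{X14}, where the same SVD-based argument appears, so your proposal matches the intended route.
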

Replacing $St_{n,p}$ with \eqref{cvxhul} yields the conic convex optimization relaxation:
\begin{equation}\label{eq:LPS-R}\tag{CR}
	\begin{array}{cl}
		\min\limits_{X\in\R^{n\times p}} &\TR(A_0X)\\
		 {\rm s.t.} &a_i^l\le \TR(A_iX)\le a_i^u,~i=1, \cdots, k,\\
	&X^TX\preceq I_p.
	\end{array}
\end{equation}

For $X\in\R^{n\times p}$, define
\begin{equation}\label{eq:YX}
	Y =\bmatrix
	I_n & X\\
	X^T & I_p
	\endbmatrix \in \S^{n+p }.
\end{equation}
According to the following congruent transformation
\begin{equation}\label{eq:YxX}
	Y =
	\bmatrix
	I_n & 0\\
	X^T & I_p
	\endbmatrix
	\bmatrix
	I_n & 0\\
	0 & I_p-X^TX
	\endbmatrix
	\bmatrix
	I_n & X\\
	0 & I_p
	\endbmatrix,
\end{equation}
it holds that
\begin{equation}\label{eq:equi0}
	X^TX\preceq I_p \Longleftrightarrow Y\succeq0.
\end{equation}
By \eqref{eq:YX} and \eqref{eq:equi0}, the conic convex optimization  \eqref{eq:LPS-R}  is equivalent to the following
SDP problem:
\begin{equation}\label{eq:LPS-SDP3}\tag{SDR}
	\begin{array}{cl}
		\min\limits_{Y\in\S^{n+p}}  & \TR(B_0Y)\\
	{\rm s.t.} & a_i^l\le\TR(B_iY)\le a_i^u,~i=1, \cdots, k,\\
		&Y_{[1:n]}=I_n,\ Y_{[n+1:n+p]}=I_p,\\
		&Y\succeq 0,
	\end{array}
\end{equation}
where
\begin{equation}\label{eq:Bi}
	B_i=\frac{1}{2}\bmatrix
	0_n & A_i^T\\
	A_i & 0_p
	\endbmatrix
	{\rm\ for\ } i=0, 1, \cdots, k.
\end{equation}
We present the relationships among \eqref{eq:LP_s}, \eqref{eq:LPS-R}, and \eqref{eq:LPS-SDP3} in the following, and put the proofs in the Appendix.
\begin{lemma} \label{lem:2}
	Suppose $\mathcal{F}\eqref{eq:LPS-SDP3}\neq \emptyset$. Then
	\eqref{eq:LPS-SDP3} has an optimal solution and
	$v\eqref{eq:LPS-R}=v\eqref{eq:LPS-SDP3}$. Moreover, $X$ solves \eqref{eq:LPS-R} if and only if $Y$ defined in \eqref{eq:YX} solves \eqref{eq:LPS-SDP3}.
\end{lemma}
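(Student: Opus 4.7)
The plan is to establish the claim via an explicit bijection between the feasible set of \eqref{eq:LPS-R} and the feasible set of \eqref{eq:LPS-SDP3} given by the map $X\mapsto Y$ defined in \eqref{eq:YX}. First I would observe that any feasible $Y$ of \eqref{eq:LPS-SDP3} has its two diagonal blocks already fixed as $I_n$ and $I_p$, so $Y$ is automatically determined by its off-diagonal block, which I call $X$. Hence this map is a bijection between $\R^{n\times p}$ and the affine slice of $\S^{n+p}$ cut out by the two block-diagonal equalities. The congruence in \eqref{eq:YxX} then gives $Y\succeq 0\Longleftrightarrow X^TX\preceq I_p$, which is exactly \eqref{eq:equi0}.

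Next I would verify that objective and linear data match under this bijection. A direct block multiplication using the structure \eqref{eq:Bi} gives
\begin{equation*}
\TR(B_iY)=\tfrac{1}{2}\TR(A_i^TX^T)+\tfrac{1}{2}\TR(A_iX)=\TR(A_iX)\quad(i=0,1,\ldots,k),
\end{equation*}
so the objectives $\TR(B_0Y)$ and $\TR(A_0X)$ coincide and each constraint $a_i^l\le\TR(B_iY)\le a_i^u$ becomes $a_i^l\le\TR(A_iX)\le a_i^u$. Combined with the previous paragraph, this shows that $X$ is feasible for \eqref{eq:LPS-R} iff $Y$ is feasible for \eqref{eq:LPS-SDP3}, with the same objective value. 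Consequently $v\eqref{eq:LPS-R}=v\eqref{eq:LPS-SDP3}$, and $X$ solves \eqref{eq:LPS-R} iff the corresponding $Y$ solves \eqref{eq:LPS-SDP3}.

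Finally I would argue existence of an optimal solution of \eqref{eq:LPS-SDP3}. The feasible region is nonempty by assumption, closed (as the intersection of closed sets defined by linear (in)equalities and the semidefinite cone), and bounded: for any feasible $Y\succeq 0$ the diagonal entries are fixed to $1$, so $|Y_{ij}|\le\sqrt{Y_{ii}Y_{jj}}=1$ for all $i,j$. Thus the feasible set is compact and the continuous linear objective attains its minimum.

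I do not anticipate a substantial obstacle; the whole argument is a matching/checking computation, and the only non-trivial step is the congruence \eqref{eq:YxX}, which is already given. The mildest subtlety is the attainment statement, which I handle via the standard bounded-diagonal bound on PSD entries rather than any compactness theorem for spectrahedra.
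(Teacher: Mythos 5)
Your proof is correct and follows essentially the same route as the paper: the bijection $X\mapsto Y$ via the congruence \eqref{eq:YxX}, the identity $\TR(B_iY)=\TR(A_iX)$, and a compactness argument for attainment. The only (immaterial) difference is that you bound the feasible set of \eqref{eq:LPS-SDP3} directly via $|Y_{ij}|\le\sqrt{Y_{ii}Y_{jj}}=1$, whereas the paper establishes compactness of $\mathcal{F}\eqref{eq:LPS-R}$ from $X_i^TX_i\le 1$ and transfers existence through the bijection.
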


\begin{proposition}\label{prop:1}
	If \eqref{eq:LPS-SDP3} has an optimal solution of rank at most $n$, then \eqref{eq:LPS-SDP3} is exact, i.e., $v\eqref{eq:LPS-SDP3}=v\eqref{eq:LP_s}$.
\end{proposition}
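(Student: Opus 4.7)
The plan is to exploit the block structure of feasible matrices for \eqref{eq:LPS-SDP3} together with the congruent decomposition \eqref{eq:YxX} to translate the rank hypothesis on $Y$ into the orthonormality condition $X^T X = I_p$.

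First I would observe that the constraints $Y_{[1:n]} = I_n$, $Y_{[n+1:n+p]} = I_p$, and $Y \succeq 0$ force every feasible $Y$ of \eqref{eq:LPS-SDP3} to be of the block form \eqref{eq:YX} for some $X \in \R^{n \times p}$, and by Lemma \ref{lem:2} this $X$ automatically solves \eqref{eq:LPS-R} when $Y$ solves \eqref{eq:LPS-SDP3}. Let $Y^*$ be an optimal solution of \eqref{eq:LPS-SDP3} with $\text{rank}(Y^*) \le n$, and let $X^*$ be the corresponding off-diagonal block.

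Next, I would invoke the congruent transformation \eqref{eq:YxX}. Because the outer triangular factors are unimodular, the rank of $Y^*$ equals $n + \text{rank}(I_p - (X^*)^T X^*)$. The hypothesis $\text{rank}(Y^*) \le n$ then forces $\text{rank}(I_p - (X^*)^T X^*) = 0$, i.e., $(X^*)^T X^* = I_p$. Hence $X^* \in St_{n,p}$, which together with the constraints $a_i^l \le \TR(A_i X^*) \le a_i^u$ (inherited since $X^*$ is feasible for \eqref{eq:LPS-R}) shows $X^* \in E_{n,p,k}$ is feasible for \eqref{eq:LP_s}.

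Finally, I would close the inequality chain. Since \eqref{eq:LPS-SDP3} is a relaxation of \eqref{eq:LP_s}, we always have $v\eqref{eq:LPS-SDP3} \le v\eqref{eq:LP_s}$. On the other hand, $X^*$ is feasible for \eqref{eq:LP_s} with objective value $\TR(A_0 X^*) = \TR(B_0 Y^*) = v\eqref{eq:LPS-SDP3}$, yielding $v\eqref{eq:LP_s} \le v\eqref{eq:LPS-SDP3}$. Equality follows, proving tightness.

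There is essentially no obstacle here: the proof is a direct application of the Schur-complement-style identity already recorded in \eqref{eq:YxX}. The only subtlety worth mentioning is that the bound $n$ on the rank is sharp, because the $n \times n$ top-left block $I_n$ contributes rank exactly $n$ to $Y^*$, so the hypothesis $\text{rank}(Y^*) \le n$ is equivalent to $\text{rank}(Y^*) = n$, which is precisely what is needed to annihilate the Schur complement $I_p - (X^*)^T X^*$.
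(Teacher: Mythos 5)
Your proposal is correct and follows essentially the same route as the paper: both use the congruence \eqref{eq:YxX} to obtain $\mathrm{rank}(Y)=n+\mathrm{rank}(I_p-X^TX)$, deduce $(X^*)^TX^*=I_p$ from the rank hypothesis, and close the inequality chain $v\eqref{eq:LPS-SDP3}\le v\eqref{eq:LP_s}\le \TR(A_0X^*)=v\eqref{eq:LPS-SDP3}$. No gaps.
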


\subsection{The second approach}\label{subsec4}
In this subsection, we reestablish the SDP relaxation \eqref{eq:LPS-SDP3} by first reformulating \eqref{eq:LP_s} as a case of QMP \cite{Beck06} and then employing  its SDP relaxation presented in \cite{Beck09}.
QMP is the quadratically constrained quadratic optimization in terms of a matrix variable,
	\begin{equation}\label{eq:QMP}\tag{QMP}
		\begin{array}{cl}
			\min\limits_{X\in\R^{n\times p}} & q_0(X)\\
			{\rm s.t.} & q_i(X)\le (=) 0,~ i=1, \cdots, m,
		\end{array}
	\end{equation}
where each involved quadratic function is of the form
$$q_i(X)=\TR(XQ_iX^T)+2\TR(A_iX)+a_i,$$
with $Q_i\in\R^{p\times p}$, $ A_i\in\R^{p\times n}$, and $a_i\in\R$ for $i=0, \cdots, m$.

 Beck \cite{Beck09} equivalently homogenized QMP as
 \begin{equation*}
 	\begin{array}{cl}
 		\min\limits_{X\in\R^{n\times p},\ Z\in\R^{p\times p}} & 	\TR\left(\bmatrix
 		\frac{a_0}{n}I_n & A_0^T\\
 		A_0 & Q_0
 		\endbmatrix\bmatrix
 		I_n & X\\
 		X^T & Z
 		\endbmatrix\right)\\
 		{\rm s.t.} & 	\TR\left(\bmatrix
 		\frac{a_i}{n}I_n & A_i^T\\
 		A_i  & Q_i
 		\endbmatrix\bmatrix
 		I_n & X\\
 		X^T & Z
 		\endbmatrix\right)\le (=) 0,\ i=1, \cdots, m,\\
 		& X^TX=Z.
 	\end{array}
 \end{equation*}
Denote
$$C_i=\begin{bmatrix}
	\frac{a_i}{n}I_n & A_i^T\\
	A_i & Q_i
\end{bmatrix} {\rm\ for\ } i=0, \cdots, m,\ {\rm and}~ Y=\begin{bmatrix}
	I_n & X\\
	X^T & Z
\end{bmatrix}.$$
According to Schur's complement theorem, we have
\begin{equation*}
	\begin{aligned}
		& X^TX=Z \Longleftrightarrow Y\succeq 0 {\rm\ and\ } {\rm rank}(Y)=n.
	\end{aligned}
\end{equation*}

Dropping the rank constraint leads to the following SDP
relaxation \cite{Beck09}
 \begin{equation}\label{eq:qapc2}
	\begin{array}{cl}
		\min\limits_{Y\in\R^{(n+p)\times (n+p)}} & 	\TR\left(C_0Y\right)\\
		{\rm s.t.} & \TR\left(C_iY\right)\le (=) 0,\ i=1, \cdots, m,\\
		&Y_{[1:n]}=I_n,\\
		&Y\succeq 0.
	\end{array}
\end{equation}
The following exactness result was presented based on the classical Shapiro-Barvinok-Pataki theorem \cite{Barvinok95,Pataki98,Shapiro82}, which can be found in Remark \ref{re:ori}.
\begin{lemma}{\rm(\cite[Theorem 2.2]{Beck09})}\label{le:Beck}
If problem \eqref{eq:qapc2} is solvable and $m\le n$, then problem \eqref{eq:QMP} is solvable and ${\rm val}\eqref{eq:qapc2}={\rm val}\eqref{eq:QMP}$.
	\end{lemma}

Problem \eqref{eq:LP_s} reads as a QMP case
\begin{equation}\label{eq:AQMP}
	\begin{array}{cl}
		\min\limits_{X\in\R^{n\times p}} &\TR(A_0X)\\
		{\rm s.t.} & a_i^l\le \TR(A_iX)\le a_i^u,~i=1, \cdots, k,\\
		&\TR(XE^{ij}X^T)=\delta_{ij},~1\le i\le j\le p,
	\end{array}
\end{equation}
where $\delta_{ij}=e_i^Te_j$ is the Kronecker delta. Applying the relaxation \eqref{eq:qapc2} to \eqref{eq:AQMP} gives
\begin{equation}\label{eq:AQMP1}
	\begin{array}{cl}
		\min\limits_{Y\in\R^{{(n+p)}\times {(n+p)}}} &\TR(B_0Y)\\
		{\rm s.t.} & a_i^l\le \TR(B_iY)\le a_i^u,~i=1, \cdots, k,\\
		&\TR( E^{ij}Y_{[n+1:n+p]})=\delta_{ij},~1\le i\le j\le p \Leftrightarrow Y_{[n+1:n+p]}=I_p,\\
		&Y_{[1:n]}=I_n,\\
		&Y\succeq0,
	\end{array}
\end{equation}
where $B_0, \cdots, B_k$ are defined in \eqref{eq:Bi}. 
The relaxation \eqref{eq:AQMP1} coincides with our relaxation \eqref{eq:LPS-SDP3}.
	\subsection{Exactness}\label{subsec5}
Under the assumption that 
\begin{equation}\label{bd:k0}
	a_i^l=a_i^u {\rm\ or\ } a_i^l=-\infty {\rm\ or\ }a_i^u=+\infty,\ \forall i=1, \cdots, k,
\end{equation}
problem \eqref{eq:AQMP} is a QMP case with
$k+p(p+1)/2$ constraints.  
As a direct corollary of Lemma \ref{le:Beck}, \eqref{eq:AQMP1} (or equivalently, \eqref{eq:LPS-SDP3})
is exact when
\begin{equation}\label{bd:k1}
\frac{p(p+1)}{2} \le n- k.
\end{equation}
Note that the Shapiro-Barvinok-Pataki theorem \cite{Barvinok95, Pataki98, Shapiro82} plays a key role in the exactness proof of Lemma \ref{le:Beck}.
To get rid of the assumption \eqref{bd:k0}, we slightly extend the Shapiro-Barvinok-Pataki theorem to the bilateral case.
\begin{lemma}\label{le:pataki}
	Let $n_1>n_2$.  Consider $\mathcal{B}:=\{Y\in \S^{n_1}: \ Y\succeq0,\ a_i^l\le \TR(B_iY)\le\ a_i^u,~i=1, \cdots, m\}$ and assume $\mathcal{B}\neq\emptyset$. If $m\le (n_2+2)(n_2+1)/2-1$, then
	\begin{equation}\label{eq:Ye}{\rm rank}(Y^e)\le n_2
\end{equation}
		 holds  for all $Y^e\in \mathcal{E}(\mathcal{B})$.
\end{lemma}
\begin{remark}\label{re:ori}
The original Shapiro-Barvinok-Pataki theorem can be seen as a specific case of Lemma \ref{le:pataki} when the additional condition \eqref{bd:k0} is satisfied. 
Expanding upon this, it is straightforward to get Lemma \ref{le:pataki}.
To illustrate, let us consider an extreme point $Y^e$ of the set $\mathcal{B}$, i.e., $Y^e\in \mathcal{E}(\mathcal{B})$. 
Then, $Y^e$ remains an extreme point of the following set 
 \begin{equation}\label{eq:pat}
\{Y\in \S^{n_1}: \ Y\succeq0,\ \TR(B_iY)= \TR(B_iY^e),~i=1, \cdots, m\}.
\end{equation}
Applying the original Shapiro-Barvinok-Pataki theorem to the set \eqref{eq:pat} yields the result \eqref{eq:Ye}.
\end{remark}

As mentioned above, the exactness result of \eqref{eq:LPS-SDP3} under conditions \eqref{bd:k0} and \eqref{bd:k1} has been established. In the following, we provide the result for cases beyond \eqref{bd:k0} based on Lemma \ref{le:pataki}. For completeness, the proof is presented in the Appendix.
\begin{proposition}\label{le:beck09}
	Suppose that $\mathcal{F}\eqref{eq:LPS-SDP3}\neq \emptyset$ and \eqref{bd:k1} hold,
then
$v\eqref{eq:LPS-SDP3}=v\eqref{eq:LP_s}$.
\end{proposition}

\section{Strengthened exactness result of the SDP relaxation}\label{se:main}
In this section, we greatly strengthen the condition for the exactness of \eqref{eq:LPS-SDP3} presented in Proposition \ref{le:beck09}.  Then, we
show the tightness of our new  condition by two examples.

On the one hand, we use the following example to show that condition \eqref{bd:k1} required in Proposition \ref{le:beck09} is already tight for the exactness of \eqref{eq:LPS-SDP3} when $p=1$.
\begin{example}\label{exam:ex1}
	Consider an instance of \eqref{eq:LP_s} with $p=1$, $n=2$, and $k=2$:
	\begin{equation}\label{eq:LP_s0}
		\begin{array}{cl}
			\min\limits_{X\in St_{2,1}} &-x_1-x_2\\
			{\rm s.t.}& x_{1}\le 0,\\
			& x_{2}\le 0.
		\end{array}
	\end{equation}
	The relaxation \eqref{eq:LPS-R} is not exact  as one can verify that
\[
	v\eqref{eq:LP_s0}=1>0\ =\ \min_{x\in\R^{2},\  x_{1}\le 0,\ x_{2}\le 0,\ x^Tx\le1}-x_1-x_2.
	\]
	By Lemma  \ref{lem:2}, \eqref{eq:LPS-SDP3} is not  exact.
\end{example}

On the other hand, when $p\ge \sqrt{2n}$, the condition \eqref{bd:k1} stated in Proposition \ref{le:beck09} is not satisfied, even when $k=0$. Note that Lemmas \ref{lem:1} and \ref{lem:2} imply that the SDP relaxation of \eqref{LSM0} provides an exact solution. This indicates that \eqref{eq:LPS-SDP3} is exact for all \eqref{eq:LP_s} instances with $p\le n, k=0$. Therefore, it is reasonable to anticipate an improvement to Proposition \ref{le:beck09}.

We now present our strengthened result.
\begin{theorem}\label{th:1}
	When $p\le n-k$,
	$v\eqref{eq:LPS-SDP3}=v\eqref{eq:LP_s}$.
\end{theorem}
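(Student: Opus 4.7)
The plan is to reduce the claim, via Lemma~\ref{lem:2}, to showing that \eqref{eq:LPS-R} attains its optimum at some point of $St_{n,p}$: once such an optimizer $X^*$ lies in $St_{n,p}$ it is automatically feasible for \eqref{eq:LP_s}, and combined with the trivial containment $\mathcal{F}\eqref{eq:LP_s}\subseteq\mathcal{F}\eqref{eq:LPS-R}$ this yields $v\eqref{eq:LPS-R}=v\eqref{eq:LP_s}$ and hence $v\eqref{eq:LPS-SDP3}=v\eqref{eq:LP_s}$. Since $\mathcal{F}\eqref{eq:LPS-R}$ is compact convex (as observed in Lemma~\ref{lem:2}) and the objective is linear, some optimum is attained at an extreme point of $\mathcal{F}\eqref{eq:LPS-R}$; hence it suffices to prove the sharper claim that when $p\le n-k$, every extreme point $X^*$ of $\mathcal{F}\eqref{eq:LPS-R}$ satisfies $X^{*T}X^*=I_p$.

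I would prove this claim by contradiction. Assume $X^*$ is extreme but $M:=I_p-X^{*T}X^*\neq 0$, and set $\rho:={\rm rank}(M)\ge 1$. Let $V\in\R^{p\times\rho}$ and $V_0\in\R^{p\times(p-\rho)}$ be orthonormal bases of $\mathrm{range}(M)$ and $\ker M$ respectively; the identity $V_0^T M V_0=0$ forces $V_0^T X^{*T}X^* V_0=I_{p-\rho}$, so $X^*V_0\in\R^{n\times(p-\rho)}$ has orthonormal columns. I then look for a perturbation of the form $H=\tilde H V^T$ with $\tilde H\in\R^{n\times\rho}$ subject to (a) $\tilde H^T X^*V_0=0$, and (b) $\TR(A_iH)=0$ for every index $i$ at which a linear constraint of \eqref{eq:LP_s} is active at $X^*$. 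Condition (a) confines each of the $\rho$ columns of $\tilde H$ to the $(n-(p-\rho))$-dimensional orthogonal complement of the columns of $X^*V_0$, leaving $(n-p+\rho)\rho$ free parameters for $\tilde H$; condition (b) imposes at most $k$ additional linear equations. Under $p\le n-k$ (so $n-p\ge k$) and $\rho\ge 1$ one obtains $(n-p+\rho)\rho-k\ge(n-p+1)-k\ge 1$, which guarantees the existence of a nonzero admissible $H$.

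The step requiring the most care is to verify that this $H$ really is a two-sided tangent direction of the PSD constraint $X^TX\preceq I_p$, i.e., that $(X^*\pm\epsilon H)^T(X^*\pm\epsilon H)\preceq I_p$ for all sufficiently small $\epsilon>0$. My plan is to expand this as $X^{*T}X^*\pm\epsilon L+\epsilon^2 H^TH\preceq I_p$ with $L:=X^{*T}H+H^TX^*$, and to decompose both sides in the orthonormal basis $I_p=VV^T+V_0V_0^T$ of $\R^p$. The identities $HV_0=0$ (from $H=\tilde H V^T$) and $\tilde H^T X^*V_0=0$ make both the $V_0$-block and the $V$--$V_0$ cross blocks of the residual matrix vanish, reducing the matrix inequality to $\Lambda\mp\epsilon V^TLV-\epsilon^2\tilde H^T\tilde H\succeq 0$ with $\Lambda:=V^TMV\succ 0$, which holds for all small enough $\epsilon>0$ by continuity. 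The linear constraints of \eqref{eq:LPS-R} are preserved by (b) for the active indices and by the strict feasibility margin for the inactive ones, yielding the sought contradiction to the extremality of $X^*$ and completing the proof.
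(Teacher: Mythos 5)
Your argument is correct, and it reaches the bound $p\le n-k$ by the same underlying mechanism as the paper --- a rank-reducing perturbation at an extreme point, supported on the range of $I_p-X^{*T}X^*$, whose existence follows from a dimension count --- but you execute it entirely in the unlifted variable $X$ on the conic relaxation \eqref{eq:LPS-R}, whereas the paper works in the lifted SDP \eqref{eq:LPS-SDP3}. There, an extreme point $Y$ of rank $n+s$ is factored as $Y=UU^T$ and perturbed to $U(I+\epsilon D)U^T$, so positive semidefiniteness of the perturbed point is automatic from $I+\epsilon D\succ 0$, and all of the work goes into bounding the rank of the linear system \eqref{eq:D1}--\eqref{eq:D4} via the matrices $M_1,M_2$ and vec-calculus. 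Your route trades that bookkeeping for the one genuinely delicate step you correctly isolate: verifying $(X^*\pm\epsilon H)^T(X^*\pm\epsilon H)\preceq I_p$ directly, which your block decomposition along $I_p=VV^T+V_0V_0^T$ handles because $\Lambda=V^TMV\succ 0$ while the $V_0$-row, $V_0$-column and cross blocks of the residual all vanish thanks to $HV_0=0$ and $\tilde H^TX^*V_0=0$; this is sound. The two perturbations are in fact the same object in disguise: the $(1,2)$-block of the paper's $UDU^T$ equals $D_{12}C^T$ with ${\rm range}(C)={\rm range}(I_p-X^{*T}X^*)$, i.e.\ exactly your $\tilde HV^T$, and your free-parameter count $(n-p+\rho)\rho-k\ge n-p+1-k\ge 1$ agrees with the paper's count in the critical case $s=\rho=1$ (your count is even slightly more generous for $\rho\ge 2$ because you do not impose a symmetry constraint on the perturbation). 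What your version buys is elementarity --- no lifting, no Kronecker/vec manipulations --- and it makes explicit the geometric statement that every extreme point of $\mathcal{F}\eqref{eq:LPS-R}$ lies in $St_{n,p}$ when $p\le n-k$, which the paper obtains only through the correspondence of Lemma \ref{lem:2} and Remark \ref{th:rem:1}. The only housekeeping left implicit is trivial: $H\neq 0$ because $V$ has full column rank, and the degenerate case $\mathcal{F}\eqref{eq:LPS-SDP3}=\emptyset$ (excluded anyway by the standing assumption $E_{n,p,k}\neq\emptyset$) would give $v\eqref{eq:LPS-SDP3}=v\eqref{eq:LP_s}=+\infty$.
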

\begin{proof}
Without loss of generalization, we assume that $\mathcal{F}\eqref{eq:LPS-SDP3}\neq \emptyset$. Otherwise, $v\eqref{eq:LPS-SDP3}=v\eqref{eq:LP_s}=+\infty$. Since the objective function of \eqref{eq:LPS-SDP3} is linear, there exists an optimal solution of \eqref{eq:LPS-SDP3} satisfying
$Y\in \mathcal{E}(\mathcal{F}\eqref{eq:LPS-SDP3})$.
	According to Proposition \ref{prop:1}, it is sufficient to show that ${\rm rank}(Y)\le n$.
	Suppose, on the contrary, ${\rm rank}(Y)=n+s$ for some $s\in\{1,\cdots, p\}$.
	
The remaining proof consists of four steps.

	{\bf Step 1: Decomposition of matrix $Y$ based on its rank.}
	Since $Y$ is feasible for \eqref{eq:LPS-SDP3}, we can rewrite $Y$ as in \eqref{eq:YX}
	for some $X\in\R^{n\times p}$. According to \eqref{eq:equi0} and $Y\succeq 0$, we have
	\begin{equation}
		I_p-X^TX\succeq0.\label{main:0}
	\end{equation}
	It follows from
	\begin{equation}\label{eq:62}
		{\rm rank}(Y)={\rm rank}(I_n)+{\rm rank}(I_p-X^TX)
		= n+{\rm rank}(I_p-X^TX)
	\end{equation}
 and assumption ${\rm rank}(Y)=n+s$ that
	\begin{equation}
		\ {\rm rank}(I_p-X^TX)=s.\label{main:1}
	\end{equation}
	Based on \eqref{main:0}-\eqref{main:1},
	there exists a  full-column-rank matrix $C\in\R^{p\times s}$ such that
	$I_p-X^TX= CC^T$.
	Define
	\begin{equation}\label{eq:U}
		U=\bmatrix
		I_n & 0_{n\times s}\\
		X^T & C
		\endbmatrix\in\R^{(n+p)\times(n+s)}.
	\end{equation}
 Then, $U$ is of full column rank,  and
	\begin{equation*}
		Y=
		UU^T.
	\end{equation*}
	
	{\bf Step 2:  Formulation of a linear system in condition that if the linear system has a nonzero solution, then $Y\notin \mathcal{E}(\mathcal{F}\eqref{eq:LPS-SDP3})$ holds.}
	Consider the following linear system in terms of matrix $D$:
	\begin{eqnarray}
		&&\TR(B_iUDU^T)= 0,~i=1, \cdots, k,\label{eq:D1}\\
		&&{UDU^T}_{[1:n]}=0_n,\label{eq:D2}\\ &&{UDU^T}_{[n+1:n+p]}=0_p,\label{eq:D3}\\
		&&D\in \S^{n+s}.\label{eq:D4}
	\end{eqnarray}
	If \eqref{eq:D1}-\eqref{eq:D4} has a nonzero solution $D$,  then define
	\begin{equation}\label{Yepsn}
		Y(\epsilon):=Y+\epsilon UDU^T=U(I+\epsilon D)U^T,~\epsilon\in\R.
	\end{equation}
	Since $Y$ is feasible for \eqref{eq:LPS-SDP3},
	for any $\epsilon\in\R$, we have
	\begin{eqnarray}
		&& \TR(B_iY(\epsilon))=\TR(B_iU(I+\epsilon D)U^T)= \TR(B_iY)\in [a_i^l, a_i^u],~i=1, \cdots, k,\label{eq:111}\\
		&&Y(\epsilon)_{[1:n]}={U(I+\epsilon D)U^T}_{[1:n]}={UU^T}_{[1:n]}=I_n,\label{eq:112}\\ &&Y(\epsilon)_{[n+1:n+p]}={U(I+\epsilon D)U^T}_{[n+1:n+p]}={UU^T}_{[n+1:n+p]}=I_p,\label{eq:113}\\
		&&Y(\epsilon)=U(I+\epsilon D)U^T=Y(\epsilon)^T.\label{eq:114}
	\end{eqnarray}
	Moreover, by the definition of  $Y(\epsilon)$,  for any $\epsilon$ such that $|\epsilon|$ is sufficiently small, it holds that
	\begin{equation}
		Y(\epsilon)=U(I+\epsilon D)U^T\succeq 0.\label{eq:115}
	\end{equation}
	Since $U$ has the full-column rank,  it follows from $D\neq 0$ that $UDU^T\neq0$.
 By \eqref{Yepsn}-\eqref{eq:115}, $Y(\epsilon)\neq Y$ is feasible for \eqref{eq:LPS-SDP3} when $\epsilon\neq0$ and $|\epsilon|$ is sufficiently small.
Combining with $Y=(Y(\epsilon)+Y(-\epsilon))/2$, we obtain a contradiction that $Y\not\in \mathcal{E}(\mathcal{F}\eqref{eq:LPS-SDP3})$.
	
{\bf Step 3:  Analysis of the coefficient matrix of the linear system \eqref{eq:D1}-\eqref{eq:D4}.}
Now, it suffices to show that the linear system \eqref{eq:D1}-\eqref{eq:D4} has a nonzero solution.
	Firstly, we  rewrite \eqref{eq:D2} as
	\begin{equation}
		\TR(E^{ij}UDU^T)=0,\ \forall \ 1\le i, \ j\le n.\label{eq:D22}
	\end{equation}
	By the definition of $U$ \eqref{eq:U}, one  can  verify that
	\[
	U^TE^{ij}U=E^{ij},~1\le i,\ j\le n,
	\]
	and hence
	\[
	D_{ij}=\TR(E^{ij}D)=\TR(U^TE^{ij}UD)=\TR(E^{ij}UDU^T),~1\le i,\ j\le n.
	\]
	 Thus,  equations \eqref{eq:D22}  are equivalent to
	\begin{equation}
		D_{ij}=0,\ \forall \ 1\le i, j\le n.\label{eq:D222}
	\end{equation}
	Secondly, \eqref{eq:D3} holds if and only if
	\begin{equation}
		\TR(U^TE^{(n+i)(n+j)}UD)=\TR(E^{(n+i)(n+j)}UDU^T)=0,\ \forall \ 1\le i, j\le p.\label{eq:D32}
	\end{equation}
	Let $x_i\in\R^n $ be the $i$-th column of $X$ and $c_i\in\R^s$ be the $i$-th column of  $C^T$ for $i=1, \cdots, p$. Then one can verify that
	\begin{equation}\label{eq:2}
		U^TE^{(n+i)(n+j)}U=\bmatrix
		x_ix_j^T & x_ic_j^T\\
		c_ix_j^T& c_ic_j^T
		\endbmatrix,~1\le i, j\le p.
	\end{equation}
	Substituting \eqref{eq:2} into \eqref{eq:D32} yields
	\[
	\TR\left(\bmatrix
	x_ix_j^T & x_ic_j^T\\
	c_ix_j^T& c_ic_j^T
	\endbmatrix D\right)=0,\ \forall 1\le i, j\le p,
	\]
	which are equivalent to
	\begin{eqnarray}
		0=&&\ 2\TR\left(\bmatrix
		x_ix_j^T & x_ic_j^T\\
		c_ix_j^T& c_ic_j^T
		\endbmatrix D\right) \nonumber\\
		=&&\ 2\TR\left(\bmatrix
		0_n & x_ic_j^T\\
		c_ix_j^T& c_ic_j^T
		\endbmatrix D\right) ~~~~({\rm since}~ \eqref{eq:D222})\nonumber\\
		=&&\ \TR\left(\bmatrix
		0_n & x_ic_j^T\\
		c_ix_j^T& c_ic_j^T
		\endbmatrix D\right)
		+\TR\left(D^T\bmatrix
		0_n & x_ic_j^T\\
		c_ix_j^T& c_ic_j^T
		\endbmatrix^T \right) \nonumber\\
		=&&\ \TR\left(\bmatrix
		0_n & x_ic_j^T\\
		c_ix_j^T& c_ic_j^T
		\endbmatrix D\right)
		+\TR\left(\bmatrix
		0_n & x_jc_i^T\\
		c_jx_i^T& c_jc_i^T
		\endbmatrix D\right) ~~~~({\rm since}~ \eqref{eq:D4})\nonumber\\
		=&&\ \TR\left(\bmatrix
		0_n & x_ic_j^T\\
		c_jx_i^T& c_ic_j^T
		\endbmatrix D\right)
		+\TR\left(\bmatrix
		0_n & x_jc_i^T\\
		c_ix_j^T& c_jc_i^T
		\endbmatrix D\right),~\forall 1\le i, j\le p.\label{eq:34}
	\end{eqnarray}
We denote $D\in \S^{n+s}$ as
	\begin{eqnarray}
		&&D=\bmatrix
		D_{11}& D_{12}\\
		D_{21}& D_{22}
		\endbmatrix,\nonumber
	\end{eqnarray}
	where $D_{11}\in\S^{n }$, $D_{12}\in\R^{n\times s}$, $D_{21}=D_{12}^T$, $D_{22}\in\S^{s }$. Let ${\rm vt}(\cdot)=(\V(\cdot))^T$.
	Define
	\begin{eqnarray}
		d&=&\bmatrix
		{\rm vt}(D_{11})&{\rm vt}(D_{12})&
		{\rm vt}(D_{21})& {\rm vt}(D_{22})
		\endbmatrix^T\in\R^{(n+s)^2}, \nonumber\\
		M_1&=&\bmatrix
		\cdots&\cdots&\cdots&\cdots\\
		{\rm vt}(0_n)& {\rm vt}(x_ic_j^T) &
		{\rm vt}(c_jx_i^T )& {\rm vt}(c_ic_j^T)\\
		\cdots&\cdots&\cdots&\cdots
		\endbmatrix_{1\le i, j\le p} \in\R^{ p^2\times(n+s)^2}\nonumber,\\
		M_2&=&\bmatrix
		\cdots&\cdots&\cdots&\cdots\\
		{\rm vt}(0_n)& {\rm vt}(x_jc_i^T) &
		{\rm vt}(c_ix_j^T)& {\rm vt}(c_jc_i^T)\\
		\cdots&\cdots&\cdots&\cdots
		\endbmatrix_{1\le i, j\le p} \in\R^{ p^2\times(n+s)^2}\nonumber.
	\end{eqnarray}
	Since $\TR(AB^T)={\rm vt}(A)({\rm vt}(B))^T$ holds for any matrices $A$ and $B$,  we can reformulate \eqref{eq:34} as
	\begin{equation}\label{eq:M12}
		(M_1+M_2)d=0.
	\end{equation}
	As the rows of $M_1$ and $M_2$ are the same except for the sort order, we have
	\begin{equation}\label{eq:RM}
		{\rm rank}(M_1)={\rm rank}(M_2)={\rm rank}(M_1+M_2).
	\end{equation}
	For any  $1\le i\le p$, define the $i$-th submatrix of $M_1$ as
	\[
	\begin{aligned}
		M_1^i=&\bmatrix
		\cdots&\cdots&\cdots&\cdots\\
		\V(0_n)& \V(x_ic_j^T) &
		\V(c_jx_i^T )& \V(c_ic_j^T)\\
		\cdots&\cdots&\cdots&\cdots
		\endbmatrix_{1\le j\le p}\in\R^{ p\times(n+s)^2}.
	\end{aligned}
	\]
	Since rank$([c_{1}~c_{2} ~ \cdots ~ c_{p}]^T)=$rank$(C)=s$, we have
	\begin{equation}
		{\rm rank}(M_1^i)\le s,~1\le i\le p. \label{eq:RM0}
	\end{equation}
	Then, it follows from \eqref{eq:RM}-\eqref{eq:RM0} that
	\begin{equation}
		{\rm rank}(M_1+M_2)={\rm rank}(M_1)\le \sum_{i=1}^p  {\rm rank}(M_1^i)\le ps. \label{eq:RM1}
	\end{equation}
	
	{\bf Step 4:  Verification that the linear system has a nonzero solution.}
	In sum,  the linear equations  \eqref{eq:D2}-\eqref{eq:D3}  are equivalent to \eqref{eq:D222} and   \eqref{eq:M12}, respectively. The rank of the coefficient matrix of \eqref{eq:D222} is $n^2$, and that of \eqref{eq:M12} is no more than $ps$.
	The symmetry condition \eqref{eq:D4} is equivalent to a linear system with $(n+s)(n+s-1)/2$ equalities, where $n(n-1)/2$ of them are already contained in \eqref{eq:D222}.
	
 To sum up the above analysis,  the whole system \eqref{eq:D1}-\eqref{eq:D4} amounts to a linear system in terms of $D\in\R^{(n+s)^2}$ where the rank $R$ of the coefficient matrix satisfies:
	\begin{eqnarray}
		R\le\  &&k+n^2+ps+\frac{(n+s)(n+s-1)}{2}-\frac{n(n-1)}{2}\nonumber\\
		=&&\ k+\frac{n(n+1)}{2}+ps+\frac{(n+s)(n+s-1)}{2}.\nonumber
	\end{eqnarray}
	Under the assumption $p\le n-k$, we have
	\begin{equation*}
		\begin{aligned}
			(n+s)^2-R=\
			&\frac{(n+s)(n+s+1)}{2}-\left[k+\frac{n(n+1)}{2}+ps\right]\\
			\ge\ &\frac{s(s+1)}{2}+(n-p)(s-1)\\
			\ge\ &1, ~~~~ ~({\rm since}\ s\ge 1,~p\le n).
		\end{aligned}
	\end{equation*}
	Consequently, \eqref{eq:D1}-\eqref{eq:D4} must have a nonzero solution. The proof is complete.
\end{proof}

\begin{remark}\label{th:rem:1}
	According to the proofs of Proposition \ref{prop:1} and Theorem \ref{th:1}, when $p\le n-k$,
	\eqref{eq:LPS-SDP3} has an optimal solution of rank $n$, and \eqref{eq:LPS-R} has an optimal solution in $E_{n,p,k}$.
\end{remark}
\begin{remark}
	Suppose $p\le n-k$.
	The proof of Theorem \ref{th:1} indicates a rank reduction algorithm to obtain an optimal rank-$n$ solution of \eqref{eq:LPS-SDP3} similarly as in \cite{Beck06}. Let $Y$ be an optimal solution to \eqref{eq:LPS-SDP3}. If rank$(Y)>n$, we first
	solve \eqref{eq:D1}-\eqref{eq:D4} to get a nonzero $D$ and define $Y(\epsilon)$ as in \eqref{Yepsn}.
	Since $Y$ is optimal  and  $Y(\epsilon)$  is feasible for sufficiently small $|\epsilon|$, we have
	\[\frac{\TR(B_0Y(\epsilon))+\TR(B_0Y(-\epsilon))}{2}=\TR(B_0Y)\le\TR(B_0Y(\epsilon)).\]
Thus,	
\begin{equation}\label{eq:optimal}
	\TR(B_0Y)=\TR(B_0Y(\epsilon))
	\end{equation}
holds, if $|\epsilon|$ is sufficiently small.

	Then we can find a suitable $\widetilde{\epsilon}$ satisfying \eqref{eq:115} and  rank$(Y(\widetilde{\epsilon}))\le$ rank$(Y)-1$. Setting $Y:=Y(\widetilde{\epsilon})$, $Y$ remains optimal by \eqref{eq:optimal}. This procedure is repeated until  rank$(Y)=n$.
\end{remark}

When $p=1$, Theorem \ref{th:1} coincides with Proposition \ref{le:beck09}, and the condition is tight as shown in Example \ref{exam:ex1}. When $p>1$,
Theorem \ref{th:1} strictly improves Proposition \ref{le:beck09}.
We illustrate the tightness of condition $p\le n-k$ presented in Theorem \ref{th:1} by two more examples.
\begin{example}\label{exam:ex2}
	Consider an instance of \eqref{eq:LP_s} with $p=2$, $n=3$, and $k=2$:
		\begin{equation}\label{eq:LP_s1}
		\begin{array}{cl}
			\min\limits_{X\in St_{3, 2}}&X_{3,2}\\
			{\rm s.t.}& X_{1,1}=X_{2,1}=0.
		\end{array}
	\end{equation}
	One can easily verify that $v\eqref{eq:LP_s1}=0$ as
	$$\mathcal{F}\eqref{eq:LP_s1}=\{X\in\R^{3\times 2}: X_{1,1}=X_{2,1}=X_{3,2}=0,\  X_{3,1}=\pm1,\  X_{1,2}^2+X_{2,2}^2=1\}.$$
	The relaxation \eqref{eq:LPS-R}  is given by
	\begin{equation}\label{eq:LP_s1d}
		\begin{array}{cl}
			\min\limits_{X\in\R^{3\times 2}}&X_{3,2}\\
			{\rm s.t.}& X_{1,1}=X_{2,1}=0,\\
			&X^TX\preceq I_2.
		\end{array}
	\end{equation}
	where
	\begin{equation*}
		\bar X=\bmatrix
		0&0\\
		0&0\\
		0&-1
		\endbmatrix
	\end{equation*}
 is feasible.
It follows that
	$$v\eqref{eq:LP_s1d}\le \bar X_{3,2}=-1<0=v\eqref{eq:LP_s1}.$$
	That is, \eqref{eq:LP_s1d} is not an  exact relaxation for \eqref{eq:LP_s1},
	so is \eqref{eq:LPS-SDP3} due to
	Lemma  \ref{lem:2}.
\end{example}
\begin{example}\label{exam:ex3}
	Consider an instance of \eqref{eq:LP_s} with $p=3$, $n=3$, and  $k=1$:
	\begin{equation}\label{eq:LP_s2}
		\begin{array}{cl}
			\min\limits_{X\in St_{3, 3}}&X_{2,2}+X_{3,3}\\
			{\rm s.t.}& X_{1,1}=0.
		\end{array}
	\end{equation}
	The \eqref{eq:LPS-R} relaxation
		\begin{equation}\label{eq:LP_s2d}
		\begin{array}{cl}
			\min\limits_{X\in\R^{3\times 3}}&X_{2,2}+X_{3,3}\\
			{\rm s.t.}& X_{1,1}=0,\\
			&X^TX \preceq I_3.
		\end{array}
	\end{equation}
	has a feasible solution
	\begin{equation*}
		\bar X=\bmatrix
		0 & 0 & 0\\
		0 & -1 & 0\\
		0 & 0 & -1
		\endbmatrix.
	\end{equation*}
	Then it holds that $v\eqref{eq:LP_s2d}\le-2$.
	On the other hand, for any $X$ satisfying
	$X^TX\preceq I_3$, we have $X_{2,2}^2\le 1$ and $X_{3,3}^2\le 1$, which imply that
	$v\eqref{eq:LP_s2d}\ge-2$.  Therefore, $v\eqref{eq:LP_s2d}=-2$ and
	$\bar X$ is an optimal solution of \eqref{eq:LP_s2d}.
	
	It is not difficult to verify that any
	\begin{equation*}
		X=\bmatrix
		 0 & X_{1,2} & X_{1,3}\\
		X_{2,1} & -1 & X_{2,3}\\
		X_{3,1} & X_{3,2} & -1
		\endbmatrix\in\R^{3\times 3}
	\end{equation*}
	cannot be a  feasible solution to \eqref{eq:LP_s2}.
	It follows that $v\eqref{eq:LP_s2}>-2=v\eqref{eq:LP_s2d}$. That is, \eqref{eq:LP_s2d} is not  an exact relaxation  for \eqref{eq:LP_s2},
	so is  the \eqref{eq:LPS-SDP3} relaxation due to
	Lemma  \ref{lem:2}.
\end{example}

We can apply Theorem \ref{th:1} to reveal the hidden convexity of \eqref{minmax0} under certain conditions. The detailed proof is provided in the Appendix.
\begin{corollary}\label{eq:cor}
	Suppose $\mathcal{F}\eqref{eq:LPS-SDP3}\neq \emptyset$.
	When $p\le n-k-m+1$,  problem \eqref{minmax0} admits the following tight SDP relaxation
	\begin{equation}\label{eq:mm:h1}
		\begin{array}{cl}
			\min\limits_{Y\in\S^{n+p},\,t\in\R} &t\\
			{\rm s.t.} & \TR(M_iY_{[1:n][n+1:n+p]})+c_i\le t,~i=1, \cdots, m,\\
			&Y\in\mathcal{F}\eqref{eq:LPS-SDP3}.
		\end{array}
	\end{equation}
Especially, both the Stiefel-constrained weighted maximin dispersion problem \eqref{eq:disp} with $p\le n-m+1$ and the $\ell_1$-norm minimization problem \eqref{eq:orth} with $(m+1)p\le n+1$ have their exact SDP relaxations.
\end{corollary}
As a byproduct of Theorem \ref{th:1}, we characterize the convexity result of a joint numerical range.
\begin{theorem}\label{eq:con}
	If $p\le n-k$, the set
\begin{equation}\label{jn:1}
G_1=\{[\TR(A_1X),\ \cdots,\ \TR(A_kX)]: ~X^TX=I_p,~X\in\R^{n\times p}\}
\end{equation}
is convex, and is equal to
\begin{equation*}G_2=\{[\TR(A_1X),\ \cdots,\ \TR(A_kX)]:  ~X^TX\preceq I_p,~X\in\R^{n\times p}\}.
\end{equation*}
\end{theorem}
\begin{proof}
It is easy to see $G_1\subseteq G_2$ and $G_2$ is convex. It suffices to show $G_2\subseteq G_1$.
	For any $a\in G_2$, we have
	\begin{equation*}
		\mathcal{X}_a:=\{X\in\R^{n\times p}: ~X^TX\preceq I_p,\ \TR(A_iX) = a_i, ~i=1,\cdots,k\}\neq\emptyset.
	\end{equation*}
Since $\mathcal{X}_a$ is compact, it must have  an extreme point, denoted as $\bar X\in\R^{n\times p}$.
According to Theorem \ref{th:1}, Remark \ref{th:rem:1}, and the assumption $p\le n-k$,  $\bar X$  satisfies
	$$\bar X^T\bar X= I_p,\ \TR(A_i\bar X) = a_i, ~i=1,\cdots,k.$$
	That is, $a\in G_1$. We complete the proof.
\end{proof}

\begin{remark}
	Beck \cite{Be07} established convexity of the following joint numerical range
	\begin{equation}\label{Be:1}
		\{[x^Tx+a_0^Tx,\ a_1^Tx,\ \cdots,\ a_k^Tx]:~ x\in\R^n\} \subseteq \R^{ k+1}
	\end{equation}
	where $a_0,\cdots,a_k\in\R^n$ and $k\le n-1$. Up to a translation transformation, we can set $a_0\equiv0$ in \eqref{Be:1} without loss of generalization.  The convexity of  \eqref{Be:1} implies that of $G_1$ \eqref{jn:1} with $p=1$.
\end{remark}

\begin{remark}\label{cor:au}
	Define $B_1, \cdots, B_k$ as in \eqref{eq:Bi}. 
	By \eqref{eq:YX}, \eqref{eq:equi0} and \eqref{eq:62}, $G_1$ \eqref{jn:1} is convex if and only if
	\begin{equation}\label{eq:YXX}
		\begin{aligned}
			\{[\TR(B_1Y)&,\ \cdots,\ \TR(B_kY)]:~Y\in\S^{ n+p },~Y\succeq 0,\  {\rm rank}(Y)=n,\\
			& \TR (E^{i,j}Y)=\delta_{i,j}, ~\forall 1\le i\le j\le n, ~n+1\le i\le j\le n+p\}
		\end{aligned}
	\end{equation}
	is convex. According to Theorem \ref{eq:con},  one can easily verify that \eqref{eq:YXX} is convex if $p\le n-k$.
	Convexity of \eqref{eq:YXX}  can also be obtained from that of
	\begin{equation}\label{eq:conY}
		\begin{aligned}
			\{[\TR(B_1Y),\cdots, \TR(B_kY),&~\TR (E_{i,j}Y)\ ( 1\le i\le j\le n, ~n+1\le i\le j\le n+p)]:\\
			&Y\in\S^{n+p},~Y\succeq 0,~ {\rm rank}(Y)=n\},
		\end{aligned}
	\end{equation}
	which is closely related to the following classical result \cite{Auyeung79}.
	\end{remark}
	\begin{lemma}{\rm\cite[Theorem 3]{Auyeung79}}\label{le:Auyeung}
		If $1\le n_2\le n_1-1$ and $l\le (n_2+2)(n_2+1)/2-1$, then for
		$B_1, \cdots, B_l\in\S^{n_1}$, the set
 \[\{[\TR(B_1Y), \cdots, \TR(B_{l}Y)]: ~Y\in\S^{n_1},\ Y\succeq0,\  {\rm rank}(Y)=n_2\}\]
		is convex, and is equal to
		\[\{[\TR(B_1Y), \cdots, \TR(B_{l}Y)]: ~Y\in\S^{n_1},\ Y\succeq0\}.\]
	\end{lemma}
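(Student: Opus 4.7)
The plan is to establish the equality $A = B$, where $A$ denotes the rank-exactly-$n_2$ image and $B$ the full PSD image. Convexity of $A$ then follows for free from convexity of $B$, which is the linear image $Y \mapsto (\TR(B_iY))_{i=1}^l$ of the convex PSD cone $\{Y \succeq 0\}$. The easy inclusion $A \subseteq B$ is immediate.

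For the reverse inclusion, fix $b \in B$ and consider the nonempty spectrahedral fiber
\[
\mathcal{F}_b = \{Y \in \S^{n_1}: Y \succeq 0,\ \TR(B_iY) = b_i,\ i = 1, \ldots, l\}.
\]
First I would run a Pataki-style rank-reduction in the spirit of the proof of Lemma \ref{le:pataki}: starting from any $Y \in \mathcal{F}_b$, whenever $r := \text{rank}(Y) > n_2$, the bound $r(r+1)/2 > l$ lets me find a symmetric perturbation $V \tilde\Lambda V^T$ (with $V \in \R^{n_1 \times r}$ a full-column-rank factor of $Y$) that preserves each $\TR(B_i \cdot) = b_i$ and strictly lowers the rank. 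Iterating yields some $Y_0 \in \mathcal{F}_b$ with $r_0 := \text{rank}(Y_0) \le n_2$.

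If $r_0 = n_2$, we are done. Otherwise $r_0 < n_2 \le n_1 - 1$, so $\ker Y_0$ has dimension $n_1 - r_0 \ge 2$. Let $W \in \R^{n_1 \times (n_1 - r_0)}$ have orthonormal columns spanning $\ker Y_0$, and set $\tilde B_i := W^T B_i W \in \S^{n_1 - r_0}$. Perturbations $Y_0 + W M W^T$ with $M \succeq 0$ and $\TR(\tilde B_i M) = 0$ for all $i$ stay in $\mathcal{F}_b$, and their rank equals $r_0 + \text{rank}(M)$ because the ranges of $Y_0$ and $WMW^T$ are orthogonal. Thus it suffices to produce such an $M$ of rank exactly $n_2 - r_0$.

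\textbf{The main obstacle is this augmentation step.} The auxiliary cone $\{M \in \S^{n_1 - r_0}: M \succeq 0,\ \TR(\tilde B_i M) = 0\}$ can collapse to $\{0\}$ — for instance whenever some positive combination $\sum_i \lambda_i \tilde B_i$ is positive definite — so a rank-one augmentation $M = vv^T$ via a common isotropic vector $v \in \ker Y_0$ may simply fail to exist. The remedy is not to push $Y$ down to a minimal extreme point in the previous step, but to choose $Y_0$ to have the \emph{largest} rank within $\mathcal{F}_b$ that does not exceed $n_2$. A dimensional stratification analysis of $\mathcal{F}_b$ by rank, using the full strength of $l \le (n_2+2)(n_2+1)/2 - 1$ together with $n_2 \le n_1 - 1$ so that enough room is left in the PSD cone, should then show that this maximum rank is exactly $n_2$, completing the argument.
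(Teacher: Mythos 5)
The paper offers no proof of this lemma---it is quoted verbatim from \cite[Theorem 3]{Auyeung79}---so I can only judge your argument on its own terms, and it has a genuine gap that you yourself flag but do not close. The first two stages are fine: convexity of the full PSD image and the Pataki-style reduction to some $Y_0\in\mathcal{F}_b$ with ${\rm rank}(Y_0)\le n_2$ are correct (if ${\rm rank}(Y)=r\ge n_2+1$ then $r(r+1)/2\ge(n_2+1)(n_2+2)/2>l$, so a feasible rank-decreasing direction exists). The problem is the augmentation from $r_0<n_2$ up to exactly $n_2$, which is where the actual content of the lemma lies, and your proposed remedy cannot work. First, restricting the perturbation to the form $Y_0+WMW^T$ with $M$ supported on $\ker Y_0$ is strictly too weak even in cases where the conclusion is true: take $l=1$, $B_1=I_{n_1}$, $b_1=1$, so $\mathcal{F}_b$ is the set of PSD matrices of trace one, which certainly contains rank-$n_2$ elements; yet $\TR(\tilde B_1M)=\TR(M)=0$ forces $M=0$, so your augmentation step produces nothing. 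Reaching rank $n_2$ here requires simultaneously shrinking $Y_0$ and adding mass on its kernel, i.e.\ a perturbation that is not supported on $\ker Y_0$.

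Second, the fallback claim---that the maximal rank attained in $\mathcal{F}_b$ among ranks $\le n_2$ is always exactly $n_2$---is simply false, so no ``dimensional stratification'' can establish it: with $B_1=I_{n_1}$ and $b_1=0$ the fiber is $\{0\}$, whose only element has rank $0<n_2$. (This degenerate fiber in fact shows that the set equality in the lemma, read literally, fails at such boundary points: for $n_1=2$, $n_2=1$, $B_1=I_2$ one gets $(0,\infty)$ versus $[0,\infty)$; the original theorem of Au-Yeung and Poon is stated so as to exclude this, and any correct proof must either work with normalized/projection-type constraints or settle for ${\rm rank}(Y)\le n_2$, in which case the statement reduces to the Barvinok--Pataki bound you already used.) So the plan as written cannot be completed: the reduction half is sound, but the augmentation half needs both a more general perturbation and a hypothesis ruling out degenerate fibers, neither of which is supplied.
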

	In case \eqref{eq:conY}, $n_1=n+p,\ n_2=n,\ l=k+p(p+1)/2+n(n+1)/2$.
	Under the additional assumption \eqref{bd:k1}, we have
	\[
	\begin{aligned}
		&l-( (n_2+1)(n_2+2)/2 -1)\\
		=\ &k+ p(p+1)/2+ n(n+1)/2-(
		(n+1)(n+2)/2-1)\\
		\le\ & n+ n(n+1)/2-
		(n+1)(n+2)/2+1\\
		=\ &0.
	\end{aligned}
	\]
	Therefore,  \eqref{eq:YXX} is convex by Lemma \ref{le:Auyeung}.
	Comparing with Theorem \ref{eq:con}, the condition \eqref{bd:k1} is too restrictive when $p>1$.

\begin{remark}
For the general quadratic function  minimization problem on the Stiefel manifold, Burer and Park \cite{Burer22} strengthened the Shor relaxation, which is equivalent to \eqref{eq:LPS-SDP3} when applying to \eqref{eq:LP_s}, see \cite{Ding11}.
However, Burer and Park's relaxation produces the same inexact solutions as \eqref{eq:LPS-SDP3}
for Examples \ref{exam:ex1}, \ref{exam:ex2}, and \ref{exam:ex3}.
Hidden convexity of the \eqref{eq:LP_s} case with $p-1= n-k$ remains unknown.
\end{remark}		
\begin{remark}
Certain special cases of \eqref{eq:LP_s} with a large number of linear constraints may still exhibit hidden convexity, as illustrated by the linear sum assignment problem (Example \ref{ex:linear_sum}) and the linear bottleneck assignment problem \eqref{exam:6}. Exploring the specific properties of data in the linear constraints could potentially lead to further extensions of the hidden convexity result.
\end{remark}

\section{Sufficient global optimality conditions based on local information}\label{se:5}

In this section, we regard \eqref{eq:LP_s} as an NLP problem, and focus on studying whether  local necessary optimality conditions could guarantee global optimality.

We have revealed hidden convexity of \eqref{eq:LP_s} when $p\le n-k$  in previous sections. However, as shown by the following example, it does not mean that any local minimizer of \eqref{eq:LP_s} is globally optimal.
\begin{example}\label{exm:suff}
	Consider an instance of \eqref{eq:LP_s} with $p=1$, $n=2$ and $k=1$:
		\begin{equation*}
		\begin{array}{cl}
			\min\limits_{x\in St_{2, 1}} &-x_1-2x_2\\
			{\rm s.t.}& x_{1}\le 0.
		\end{array}
	\end{equation*}
	It is not difficult to verify that $(0, 1)^T$ is the unique global minimizer, and $(0, -1)^T$ is a local nonglobal minimizer.
\end{example}


As a well-known result, if the KKT condition holds and the corresponding Lagrangian function is convex with respect to the primal variables, then the global optimality holds. For completeness, we present the corresponding result for \eqref{eq:LP_s} with a detailed proof in the Appendix.
\begin{lemma}\label{le:sd}
	Let $X^* \in E_{n,p,k}$. If there exist $\lambda_i\in\R$ for $i=1, \cdots, k$ and $\Lambda\succeq 0$ such that
	\begin{eqnarray}
		&&A_0^T+\sum_{i=1}^k\lambda_i A_i^T+X^*\Lambda =0,\label{eq:kktY10}\\
		&&\TR (A_i X^*)-a_i\le 0,\  i=1, \cdots, k,\ X^{*T}X^*=I_p,\label{eq:kkt_f0}\\
		&&\lambda_i^+ (\TR (A_i X^*)-a_i^u)=\lambda_i^-(\TR (A_i X^*)-a_i^l)=0,\  i=1, \cdots, k,\label{eq:kktY02}
	\end{eqnarray}
	then $X^*$ is a global minimizer of \eqref{eq:LP_s}.
\end{lemma}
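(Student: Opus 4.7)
The plan is to verify Lemma 5.1 via the standard convex-Lagrangian sufficient condition: we exhibit a Lagrangian that is convex in the primal variable $X$, show that the KKT data make $X^*$ a stationary (hence global) minimizer of this Lagrangian over $\R^{n\times p}$, and then use weak duality together with complementarity to compare $\TR(A_0X^*)$ with $\TR(A_0X)$ for an arbitrary feasible $X$.

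Concretely, using $\lambda_i^+,\lambda_i^-\ge 0$ with $\lambda_i=\lambda_i^+-\lambda_i^-$, I would form
\begin{equation*}
L(X) := \TR(A_0X) + \sum_{i=1}^k \lambda_i^+\bigl(\TR(A_iX)-a_i^u\bigr) + \sum_{i=1}^k \lambda_i^-\bigl(a_i^l-\TR(A_iX)\bigr) + \tfrac{1}{2}\TR\bigl(\Lambda(X^TX-I_p)\bigr).
\end{equation*}
The only nonlinear term is $\tfrac12\TR(\Lambda X^TX)$, whose Hessian in $\V(X)$ equals $\Lambda\otimes I_n$. Since $\Lambda\succeq 0$, this tensor product is positive semidefinite, so $L$ is convex in $X\in\R^{n\times p}$. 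A direct differentiation gives $\nabla_X L(X^*) = A_0^T+\sum_i \lambda_i A_i^T + X^*\Lambda$, which vanishes by the stationarity condition \eqref{eq:kktY10}. Convexity then yields $L(X^*)\le L(X)$ for every $X\in\R^{n\times p}$.

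Next I would read off two numerical identities. For any feasible $X\in E_{n,p,k}$, the signs $\lambda_i^+\ge 0$ and $\TR(A_iX)\le a_i^u$ make each term $\lambda_i^+(\TR(A_iX)-a_i^u)\le 0$, and symmetrically $\lambda_i^-(a_i^l-\TR(A_iX))\le 0$; together with $X^TX=I_p$ this gives $L(X)\le\TR(A_0X)$. At $X^*$, the complementarity relations \eqref{eq:kktY02} kill both penalty sums (either $\lambda_i^+=0$, or $\TR(A_iX^*)=a_i^u$, and analogously for $\lambda_i^-$), while $X^{*T}X^*=I_p$ from \eqref{eq:kkt_f0} kills the equality term; hence $L(X^*)=\TR(A_0X^*)$. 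Chaining,
\begin{equation*}
\TR(A_0X^*) = L(X^*)\le L(X)\le \TR(A_0X),
\end{equation*}
so $X^*$ is globally optimal for \eqref{eq:LP_s}.

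I do not anticipate any substantive obstacle: this is the textbook weak-duality/convex-Lagrangian argument, and all ingredients are supplied by the hypotheses. The only bookkeeping care is to match the factor-$\tfrac12$ convention in front of $\TR(\Lambda(X^TX-I_p))$ so that differentiation reproduces the $X^*\Lambda$ term (rather than $2X^*\Lambda$) appearing in \eqref{eq:kktY10}, and to remember that $\lambda_i^+$ and $\lambda_i^-$ cannot both be active simultaneously as $a_i^l\le a_i^u$, which is automatic from the definition $\lambda_i^\pm=\max\{0,\pm\lambda_i\}$.
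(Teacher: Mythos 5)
Your proof is correct and follows essentially the same convex-Lagrangian/weak-duality argument as the paper: add the nonpositive penalty terms, use $\Lambda\succeq 0$ and \eqref{eq:kktY10} to see $X^*$ minimizes the resulting convex quadratic over all of $\R^{n\times p}$, then collapse everything at $X^*$ via feasibility and complementarity. Your explicit $\tfrac{1}{2}$ in front of $\TR(\Lambda(X^TX-I_p))$ is in fact the right normalization for \eqref{eq:kktY10} to be exact stationarity of $L$ (the paper's displayed chain omits it, which costs nothing on the feasible set where $X^TX=I_p$ but makes the intermediate unconstrained-minimization step slightly less tidy).
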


Considering \eqref{eq:LP_s} as an NLP problem in terms of the vector variable in $\R^{np}$,
we obtain that, under certain assumptions, the standard first- and second-order necessary optimality conditions are sufficient for the global minimizer. The proof is presented in the Appendix.
\begin{lemma}\label{le:1}
	Assume $p(p+1)/2+1\le   n-k$. Let $X^*\in E_{n,p,k}$  satisfy LICQ, and the standard first- and second-order necessary optimality conditions. Then $X^*$ is a global minimizer of \eqref{eq:LP_s}.
\end{lemma}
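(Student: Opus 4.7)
The plan is to regard \eqref{eq:LP_s} as a smooth nonlinear program in $X\in\R^{n\times p}$ with $p(p+1)/2$ quadratic equalities encoding $X^TX = I_p$ and $k$ linear two-sided constraints, and then to invoke Lemma \ref{le:sd}. Under LICQ, the first-order necessary condition yields unique multipliers $\lambda_i\in\R$ and a symmetric $\Lambda\in\S^p$ satisfying \eqref{eq:kktY10}--\eqref{eq:kktY02}, so by Lemma \ref{le:sd} the entire task reduces to proving $\Lambda\succeq 0$.

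The Hessian of the Lagrangian at $X^*$ comes only from the term $\TR(\Lambda(X^TX - I_p))$, since the objective and the linear constraints are linear in $X$; along a direction $D\in\R^{n\times p}$ it equals a positive multiple of $\TR(\Lambda D^TD)$. The second-order necessary condition therefore asserts $\TR(\Lambda D^TD)\ge 0$ for every $D$ in the critical cone. I would lower-bound this cone by the symmetric linear subspace
\[
\mathcal{C} := \{D\in\R^{n\times p}:\ D^TX^* = 0,\ \TR(A_iD) = 0 \text{ for every } i \text{ active at } X^*\},
\]
observing that (i) $D^TX^* = 0$ already yields $D^TX^* + X^{*T}D = 0$, the Stiefel tangent condition; and (ii) enforcing equality at every active linear constraint places both $D$ and $-D$ in the critical cone, irrespective of whether the associated multiplier is zero or nonzero.

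For arbitrary $v\in\R^p$, I would then choose the rank-one probe $D = uv^T$ and solve for $u\in\R^n$ the system $X^{*T}u = 0$ together with $v^TA_iu = 0$ for each active index $i$. These amount to at most $p + k$ linear equations on $u$; since the hypothesis $p(p+1)/2 + 1 \le n - k$ implies $p + 1 \le n - k$ (because $p(p+1)/2 \ge p$ for $p \ge 1$), a nonzero $u$ exists. The resulting $D = uv^T$ lies in $\mathcal{C}$, and the second-order inequality gives
\[
0 \le \TR(\Lambda D^TD) = \|u\|^2\, v^T\Lambda v,
\]
forcing $v^T\Lambda v \ge 0$. Since $v$ was arbitrary, $\Lambda\succeq 0$, and Lemma \ref{le:sd} delivers global optimality of $X^*$.

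The main obstacle I anticipate is correctly matching the probe direction to the critical cone: inequality constraints with zero multipliers only impose one-sided linearized conditions, and the second-order necessary condition must be applied on the full critical cone, not merely the tangent space to the equality constraints. Restricting to the symmetric subspace $\mathcal{C}$ bypasses this by forcing equality at every active index, losing nothing for the argument since the rank-one probe $D = uv^T$ needs only a single free direction, which the counting $p + k < n$ readily provides.
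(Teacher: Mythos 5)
Your proof is correct, and it takes a genuinely different route from the paper's. The paper proves Lemma \ref{le:1} by a dimension count on the whole critical subspace: the directions $V$ satisfying the linearized constraints form a subspace of codimension at most $k+p(p+1)/2\le n-1$ in $\R^{np}$, so the Hessian $\Lambda\otimes I_n$ can have at most $n-1$ negative eigenvalues, while any negative eigenvalue of $\Lambda$ would contribute $n$ of them to $\Lambda\otimes I_n$ --- contradiction. Your rank-one probe $D=uv^T$ with $X^{*T}u=0$ instead tests $\Lambda$ one direction $v$ at a time, reducing everything to the existence of a nonzero $u$ satisfying $p+k<n$ linear conditions; the identities $D^TD=\|u\|^2vv^T$ and $\TR(A_iuv^T)=v^TA_iu$ then give $v^T\Lambda v\ge 0$ directly. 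Your handling of the critical cone is also sound: imposing $\TR(A_iD)=0$ at every active index places $D$ in the cone regardless of the sign of the multiplier, and the restriction costs nothing since only one free direction for $u$ is needed. The notable payoff of your argument is that it uses only $p+1\le n-k$ rather than $p(p+1)/2+1\le n-k$, so it in fact establishes the paper's stronger Theorem \ref{th:nolocal} directly --- and more simply than the paper does there, since your choice $D=uv^T$ with $u\perp\mathrm{range}(X^*)$ implicitly isolates the $\Lambda\otimes I_{n-p}$ block that the paper reaches only after an orthogonal change of variables, a skew-symmetric decomposition of the tangent space, and an appeal to Cauchy's interlacing theorem. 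One cosmetic caveat: with two-sided constraints $a_i^l\le\TR(A_iX)\le a_i^u$, an index active at both bounds (i.e.\ $a_i^l=a_i^u$) still contributes only one linear condition on $u$, so your count of at most $p+k$ equations stands.
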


 To our surprise, the above result can be  greatly strengthened by more careful analysis.
\begin{theorem}\label{th:nolocal}
	Suppose $p+1\le n-k$.  Let $X^*\in E_{n,p,k}$  satisfy LICQ, and the first- and second-order necessary optimality conditions. Then,   $X^*$ is a global minimizer of \eqref{eq:LP_s}.
\end{theorem}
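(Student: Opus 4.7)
The plan is to strengthen Lemma \ref{le:1} by refining the search for a direction that exposes a negative eigenvalue of the Lagrange multiplier $\Lambda$. As before, LICQ yields the KKT system \eqref{eq:kktY10}--\eqref{eq:kktY02} together with a symmetric multiplier $\Lambda\in\S^p$ for the Stiefel constraint $X^{T}X=I_p$. By Lemma \ref{le:sd}, it suffices to prove that $\Lambda\succeq 0$. Arguing by contradiction, I would assume $\Lambda$ has an eigenpair $(\mu,u)$ with $\mu<0$ and $\|u\|=1$, and try to construct a feasible second-order direction $V$ at $X^*$ along which the Hessian form $\V(V)^T(\Lambda\otimes I_n)\V(V)=\TR(\Lambda V^TV)$ is strictly negative.

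The key structural idea is to restrict attention to rank-one test matrices of the form $V=wu^T$ with $w\in\R^n$ to be chosen. A direct calculation gives $V^TX^*+X^{*T}V=u(X^{*T}w)^T+(X^{*T}w)u^T$, so the tangent-space equation \eqref{eq:second_v2} reduces to the single rank-one equation $u(X^{*T}w)^T+(X^{*T}w)u^T=0$, which (since $u\ne0$) forces $X^{*T}w=0$. Likewise, the active-constraint equation \eqref{eq:second_v1} becomes the scalar condition $(A_i^Tu)^Tw=0$ for each $i\in\mathcal{A}$. Thus the admissible $w$ live in
\[
W:=\ker(X^{*T})\cap\bigcap_{i\in\mathcal{A}}\ker\!\bigl((A_i^Tu)^T\bigr)\subseteq\R^n.
\]

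Since $X^{*T}X^*=I_p$, the map $X^{*T}:\R^n\to\R^p$ has rank $p$, so $\dim\ker(X^{*T})=n-p$. Adding at most $|\mathcal{A}|\le k$ further linear constraints gives $\dim W\ge n-p-k$, and the hypothesis $p+1\le n-k$ yields $\dim W\ge 1$. Pick any nonzero $w\in W$; then $V=wu^T$ satisfies \eqref{eq:second_v1}--\eqref{eq:second_v2}, while
\[
\TR(\Lambda V^TV)=\TR(\Lambda u w^Tw u^T)=\|w\|^2\,u^T\Lambda u=\mu\|w\|^2<0,
\]
contradicting the second-order necessary condition \eqref{eq:second_v3}. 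Hence $\Lambda\succeq0$, and Lemma \ref{le:sd} concludes the proof.

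The main obstacle is choosing the ansatz $V=wu^T$: a generic direction in the critical cone consumes $p(p+1)/2$ dimensions for the Stiefel tangent condition, which is exactly why Lemma \ref{le:1} needs the quadratic bound $p(p+1)/2+1\le n-k$. The observation that the rank-one test matrix $V=wu^T$ collapses the $p(p+1)/2$ symmetric equations into the single vector equation $X^{*T}w=0$ (only $p$ equations) is what lets the dimension count go through under the far weaker linear hypothesis $p+1\le n-k$. Aside from this insight, the remaining steps are routine linear algebra, and no further manipulation of the SDP relaxation is required.
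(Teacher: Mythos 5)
Your proof is correct, and it reaches the conclusion by a genuinely different (and more elementary) route than the paper. Both arguments reduce, via Lemma \ref{le:sd}, to showing $\Lambda\succeq 0$, and both exploit the same underlying fact that on directions orthogonal to the column space of $X^*$ the Stiefel tangent condition is free and the Lagrangian Hessian acts like $\Lambda$; but the executions differ. The paper first performs an orthogonal change of coordinates so that $X^*=[e_1,\cdots,e_p]$, splits a tangent direction $V$ into a skew-symmetric block $U$ and a free block $W$, reorganizes the Hessian into a matrix $M$ whose lower-right principal block is $\Lambda\otimes I_{n-p}$, and then counts negative eigenvalues: the second-order condition on a subspace of codimension at most $k\le n-p-1$ caps the number of negative eigenvalues of $M$ at $n-p-1$, while Cauchy's interlace theorem forces at least $n-p$ of them if $\Lambda\not\succeq0$. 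You instead take a putative negative eigenvector $u$ of $\Lambda$ and test the rank-one direction $V=wu^T$: the identity $V^TX^*+X^{*T}V=u(X^{*T}w)^T+(X^{*T}w)u^T$ collapses the $p(p+1)/2$ tangent equations to the $p$ equations $X^{*T}w=0$, each active linear constraint contributes one scalar equation $(A_i^Tu)^Tw=0$, and the count $n-p-k\ge1$ hands you a nonzero admissible $w$ with $\TR(\Lambda V^TV)=\mu\|w\|^2<0$ — a direct contradiction with \eqref{eq:second_v3}. Your computations check out (in particular, $uv^T+vu^T=0$ with $u\neq0$ does force $v=0$, and $\V(V)^T(\Lambda\otimes I_n)\V(V)=\TR(\Lambda V^TV)$ under the column-stacking convention), and your version needs no coordinate change and no interlacing theorem; it also makes explicit a concrete negative-curvature feasible direction, which the paper's eigenvalue count does not. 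The trade-off is minor: the paper's block decomposition makes visible exactly where the bound $n-p-1\ge k$ enters as an eigenvalue budget, whereas your argument localizes everything to a single eigenvector of $\Lambda$.
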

\begin{proof}
Let $X^*=[X_1^*, \cdots, X_p^*]$. There exist vectors $X_{p+1}^*, \cdots, X_n^*\in\R^n$ such that $Q:=[X_1^*, \cdots, X_p^*, X_{p+1}^*, \cdots, X_n^*]$ is orthogonal.
Consider the  orthogonal linear transformation $Z=Q^TX$. We have
	\begin{eqnarray}
		&&X^TX=I_p  \Longleftrightarrow Z^TZ=I_p ,\nonumber\\
		&&\TR(A_iX)=\TR(A_iQZ),\  i=0, \cdots, k.\nonumber
	\end{eqnarray}
Define $\tilde A_i=A_iQ$ for $i=0, \cdots, k$.
Then, $Z^*=Q^TX^*=[e_1, \cdots, e_p]$ is a global minimizer of the following \eqref{eq:LP_s} problem:
\begin{equation*}
	\begin{array}{cl}
		\min\limits_{Z\in St_{n,p}} &\TR(\tilde A_0Z)\\
		{\rm s.t.}& a_i^l \le\TR(\tilde A_iZ)\le a_i^u,~i=1, \cdots, k.
	\end{array}
\end{equation*}
Moreover, under any nonsingular linear transformation, LICQ and the first- and second-order necessary optimality conditions remain invariant.  So, without loss of generality, we simply assume that
	\[
	X^*=[e_1, \cdots, e_p]
	\]
	is a global minimizer of the original \eqref{eq:LP_s}.
	
Define the set of active indices for linear inequality constraints as \begin{equation}\label{eq:A}
		\mathcal{A}=\{i:\ \TR (A_i X^*)-a_i^l=0\ {\rm or}\ \TR (A_i X^*)-a_i^u=0\}.
	\end{equation}According to the second-order necessary optimality condition, for any $V\in\R^{n\times p}$ such that
	\begin{eqnarray}
		&&\TR(A_iV) =0,\ i\in \mathcal{A},\label{eq:1second_v1}\\
		&&V_i^TX^*_j+V_j^TX^*_i=0,\ \forall 1\le i\le j\le p,\label{eq:1second_v2}
	\end{eqnarray}
	it must hold that
	\begin{eqnarray}
		&&\V(V)^T (\Lambda\otimes I_n) \V(V)\ge0.\label{eq:1second_v3}
	\end{eqnarray}
Note that \eqref{eq:1second_v2} is
	\begin{equation}
		V_i^Te_j+  V_j^Te_i=0,\ \forall 1\le i\le j\le p.\label{eq:second_tv20}
	\end{equation}
	Define $U=V_{[1:p][1:p]}\in\R^{p\times p}$  and $W=V_{[p+1:n][1:p]}\in\R^{(n-p)\times p}$. That is,
	$V=[U; W].$ Then equations \eqref{eq:second_tv20} are equivalent to
	\begin{equation*}
		U_{ij}+ U_{ji}=0,\ \forall1\le i\le j\le p,
	\end{equation*}
	i.e., $U$ is skew-symmetric.

	In sum, by collecting all strictly upper triangular entries of the skew-symmetric matrix $U$, $U_{ij}$ $(1\le i<j\le n)$,
	as $u\in\R^{p(p-1)/2}$, we reorganize the second-order optimality condition \eqref{eq:1second_v1}-\eqref{eq:1second_v3} as
 \begin{equation}\label{eq:ww}
		[u^T,\ \V(W)^T]b_i=0,\ i\in \mathcal{A} \Longrightarrow [u^T,\ \V(W)^T]M\left[\begin{matrix}u\\\V(W)\end{matrix}\right]\succeq0,
	\end{equation}
	where $b_i$ and $M\in\S^{p(p-1)/2+(n-p)p}$ are rearranged counterparts of $ A_i$ and $\Lambda\otimes I_n$, respectively.
	
	Since $\mathcal{A}$ contains at most $k$ indices and we have assumed $k\le n-p-1$, it follows from \eqref{eq:ww}  that $M$ has at most $n-p-1$ negative eigenvalues.
	On the other hand,
	we have
	\begin{equation}\label{eq:MM}
		M_{[p(p-1)/2+1:p(p-1)/2+(n-p)p]}={(\Lambda\otimes I_n)}_{[p^2+1:p^2+(n-p)p]}=\Lambda\otimes I_{n-p}.
	\end{equation}
Thus, if $\Lambda$ has a negative eigenvalue, then $\Lambda\otimes I_{n-p}$ has at least $n-p$
	negative eigenvalues and so does $M_{[p(p-1)/2+1:p(p-1)/2+(n-p)p]}$ by \eqref{eq:MM}.
	According to Cauchy's interlace theorem \cite{Hw04}, $M$ has at least $n-p$ negative eigenvalues.
	The contradiction implies that $\Lambda\succeq 0$. Then,  according to Lemma \ref{le:sd}, we complete the proof.
\end{proof}

\begin{remark}
	According to Example \ref{exm:suff}, the condition $p+1\le n-k$ assumed in Theorem \ref{th:nolocal} is tight.
\end{remark}

Based on Theorem \ref{th:nolocal},  we immediately obtain a sufficient condition under which \eqref{eq:LP_s} has no local nonglobal minimizer.
\begin{corollary}\label{cor:cond}
If $p+1\le n-k$, and LICQ holds at any feasible point of \eqref{eq:LP_s},
	then any local minimizer of \eqref{eq:LP_s} is globally optimal.
\end{corollary}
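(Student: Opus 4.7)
The plan is to recognize that Corollary \ref{cor:cond} is essentially an immediate specialization of Theorem \ref{th:nolocal}, so the proof will just be a short verification that the hypotheses of that theorem are met at every local minimizer.

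First, I would take an arbitrary local minimizer $X^* \in E_{n,p,k}$ of \eqref{eq:LP_s}. By the standing assumption of the corollary, LICQ holds at $X^*$. This is exactly the constraint qualification needed to guarantee that the standard first-order (KKT) necessary optimality condition holds at any local minimizer of a smooth nonlinear program, and with LICQ in force the standard second-order necessary condition also holds: the Hessian of the Lagrangian is positive semidefinite on the critical subspace determined by the linearized active constraints. This is a textbook consequence of nonlinear programming theory applied to \eqref{eq:LP_s} viewed as an optimization problem in the vector variable $\V(X)\in\R^{np}$ with the equality constraints $X^TX=I_p$ (i.e. $\TR(XE^{ij}X^T)=\delta_{ij}$) and the linear inequality constraints $a_i^l\le\TR(A_iX)\le a_i^u$.

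Next, I would directly invoke Theorem \ref{th:nolocal}: under the hypothesis $p+1\le n-k$, any $X^*\in E_{n,p,k}$ satisfying LICQ together with the first- and second-order necessary optimality conditions is automatically a global minimizer of \eqref{eq:LP_s}. Applying this to the $X^*$ above concludes the proof.

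There is no real obstacle here; the substantive work has already been done in Theorem \ref{th:nolocal}. The only thing one must be mildly careful about is the phrasing: we need the second-order condition in the form used in the proof of Theorem \ref{th:nolocal}, namely nonnegativity of $\V(V)^T(\Lambda\otimes I_n)\V(V)$ over all $V$ satisfying \eqref{eq:second_v1}--\eqref{eq:second_v2}, but this is precisely what LICQ delivers at a local minimizer, so no further argument is required.
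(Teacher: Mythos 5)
Your proposal is correct and matches the paper's intent exactly: the paper states Corollary \ref{cor:cond} as an immediate consequence of Theorem \ref{th:nolocal} without a separate proof, and your verification that LICQ at a local minimizer yields the standard first- and second-order necessary conditions (in particular on the subspace cut out by the linearized active constraints, which is all the theorem's proof uses) is the only step needed.
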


The LICQ condition presented in Corollary \ref{cor:cond} guarantees that $E_{n,p,k}$ is an embedded submanifold in $St_{n, p}$. To achieve the same goal, a more generalized condition is assumed that the Jacobin matrices of the constraints at all feasible points have the same rank \cite[Theorem 5.12]{Lee09}.
However, as illustrated by the following example,
the generalized condition does not hold true for general \eqref{eq:LP_s}.
\begin{example}
		Consider an instance of \eqref{eq:LP_s} with $p=2$, $n=4$ and $k=1$:
		\begin{equation*}
			\begin{array}{cl}
				\min\limits_{X\in St_{4,2}} &0\\
				{\rm s.t.}& \TR(AX)=1,
			\end{array}
		\end{equation*}
where
\begin{equation*}
A
	=
	\bmatrix
	1 & 1 & 0 & 0\\
	1 & 0 & 0 & 0
	\endbmatrix.
\end{equation*}
Given the following two feasible points
\begin{equation*}
	\bar X=\bmatrix
	\bar X_1 & \bar X_2
	\endbmatrix
	=
	\bmatrix
	1 & 0\\
	0 & 1\\
	0 & 0\\
	0 & 0
	\endbmatrix,\
	\tilde X=\bmatrix
    \tilde X_1 & \tilde X_2
	\endbmatrix
	=
	\bmatrix
	\frac{1}{2} & 0\\
	\frac{1}{2} & \frac{\sqrt{2}}{2}\\
	0 & \frac{1}{2}\\
	\frac{\sqrt{2}}{2} & -\frac{1}{2}
	\endbmatrix,
\end{equation*}
the Jacobin matrices of the constraints are
\begin{eqnarray*}
&&	\bar J=\bmatrix
	\bar X_1 & 0 &  \bar X_2 &  {(A^T)}_1\\
	0 & \bar X_2 & \bar X_1   &  {(A^T)}_2
	\endbmatrix
	=
	\bmatrix
	1 & 0 & 0 & 1\\
	0 & 0 & 1 & 1\\
	0 & 0 & 0 & 0\\
	0 & 0 & 0 & 0\\
	0 & 0 & 1 & 1\\
	0 & 1 & 0 & 0\\
	0 & 0 & 0 & 0\\
	0 & 0 & 0 & 0
	\endbmatrix, \\
\end{eqnarray*}
and
\begin{eqnarray*}
&&
\tilde J=\bmatrix
\tilde X_1 & 0 &  \tilde X_2 &  {A^T}_1\\
0 & \tilde X_2 & \tilde X_1   &  {A^T}_2
\endbmatrix
=
\bmatrix
\frac{1}{2}  & 0 & 0 & 1\\
\frac{1}{2}  & 0 & \frac{\sqrt{2}}{2} & 1\\
0 & 0 & \frac{1}{2} & 0\\
\frac{\sqrt{2}}{2}  & 0 & -\frac{1}{2} & 0\\
0 & 0 & \frac{1}{2} & 1\\
0 & \frac{\sqrt{2}}{2}  & \frac{1}{2} & 0\\
0 & \frac{1}{2} & 0 & 0\\
0 & -\frac{1}{2} & \frac{\sqrt{2}}{2} & 0
\endbmatrix,
\end{eqnarray*}
respectively.
One can easily verify that rank$(\bar J)=3$ and rank$(\tilde J)=4$.
\end{example}

\begin{remark}
There are two trivial cases of \eqref{eq:LP_s} where the feasible sets are manifolds: (i) $k=0$; (ii) $p=1$ and all linear constraints are equality ones.
In case (i), the feasible set is $St_{n,p}$, and so LICQ holds at any feasible point. In case (ii), the feasible set, if not empty, is either a singleton or a sphere.
\end{remark}

\section{Conclusion}
We have introduced a new optimization model \eqref{eq:LP_s} by adding $k$ linear constraints to the linear optimization on the Stiefel manifold $St_{n,p}$. 
We provided some applications of \eqref{eq:LP_s}, from which we showed that \eqref{eq:LP_s} is NP-hard in general. 
Moreover, \eqref{eq:LP_s} also serves as the key subproblem in extended algorithms to 
solve constrained optimization problems with an additional Stiefel manifold constraint.
Following the slightly extended Shapiro-Barvinok-Pataki theorem, 
we verified that \eqref{eq:LP_s} admits an exact  SDP relaxation when $p(p+1)/2\le n-k$. 
Our analysis showed that this sufficient condition can be greatly strengthened to $p \le n-k$, 
which includes the classical case $p\le n$ and $k=0$. 
The hidden convexity, particularly in the case of $p-1=n-k$, remains unknown. It should be noted that hidden convexity does not mean that any local minimizer of \eqref{eq:LP_s} is globally optimal. Regarding  \eqref{eq:LP_s} as a smooth NLP problem, we showed that, at a feasible point satisfying LICQ, the standard first- and second-order necessary optimality conditions are sufficient for the global optimality when $p+1\le n-k$. When and only when LICQ holds at all feasible points of \eqref{eq:LP_s} is unknown. Another theoretical question is when and only when the intersection of a hyperplane and $St_{n,p}$ remains a manifold.

\section*{Appendix}
\subsection* {Proof of Proposition \ref{pro:nphard}}
Given any integer vector $a\in\R^n$, the NP-complete partitioning problem
asks whether
\begin{equation}\label{pp0}\tag{PP}
	a^Ty=0,\ y\in{\{-1, 1\}}^n
\end{equation}
has a solution.
According to \eqref{reform:0},
\eqref{pp0} is feasible if and only if problem
\begin{equation}\label{eq:1}
	\begin{array}{cl}
		\min\limits_{X\in St_{n,1}} & a^TX\\
		{\rm s.t.} & a^TX\ge 0,\\
		&  -\frac{1}{\sqrt{n}}\le e_i^TX\le \frac{1}{\sqrt{n}},~i=1, \cdots, n
	\end{array}
\end{equation}
has an optimal value of $0$. Problem \eqref{eq:1} can be expanded to
the following \eqref{eq:LP_s} with $k= n+1$  and any $p\ge1$:
\begin{equation*}
	\begin{array}{cl}
		\min\limits_{X\in St_{n,p}} & \TR(A_0X)\\
		{\rm s.t.} & \TR(A_0X)\ge0,\\
		&  -\frac{1}{\sqrt{n}}\le \TR(E_iX) \le \frac{1}{\sqrt{n}},~i=1, \cdots, n,
	\end{array}
\end{equation*}
where $A_0^T=[a~ 0~\cdots 0]\in\R^{n\times p}$ and $E_i^T=[e_i~ 0~\cdots 0]\in\R^{n\times p}$.
We complete the proof by the fact that \eqref{pp0} is reduced in polynomial time to an \eqref{eq:LP_s} case.

\subsection*{Proof of Lemma \ref{lem:2}}
	By \eqref{eq:YX}, \eqref{eq:equi0}, and \eqref{eq:Bi},
	$\mathcal{F}\eqref{eq:LPS-SDP3}\neq \emptyset$ if and only if
	$\mathcal{F}\eqref{eq:LPS-R}\neq \emptyset$.
	Let $X$ be any feasible solution of \eqref{eq:LPS-R}. Define $Y$  as in \eqref{eq:YX}.
	Then $Y$ is a feasible solution of \eqref{eq:LPS-SDP3} with the same objective function value as $X$ for \eqref{eq:LPS-R}. The converse is also true. Consequently, $v\eqref{eq:LPS-R}=v\eqref{eq:LPS-SDP3}<+\infty$.
	Besides, according to $X^TX\preceq I_p$, we have
	\[
	X_i^TX_i\le 1,~i=1,\cdots,p.
	\]
	Thus $\mathcal{F}\eqref{eq:LPS-R}$ is compact.  Hence, $\eqref{eq:LPS-R}$ has an optimal solution and so does
	\eqref{eq:LPS-SDP3}.

\subsection*{Proof of Proposition \ref{prop:1}}
	Let $Y$ be any feasible solution of \eqref{eq:LPS-SDP3}. Rewrite $Y$ as in \eqref{eq:YX}.
	According to \eqref{eq:YxX},  we have
	\begin{equation*}
		{\rm rank}(Y)={\rm rank}(I_n)+{\rm rank}(I_p-X^TX)
		= n+{\rm rank}(I_p-X^TX),
	\end{equation*}
	which implies that
	\begin{eqnarray}
		&&{\rm rank}(Y)\ge n,\label{eq:equi01}\\
		&&{\rm rank}(Y)=n \Longleftrightarrow  X^TX=I_p. \label{eq:equi1}
	\end{eqnarray}
	Let $Y^*$ be an optimal solution of \eqref{eq:LPS-SDP3} and ${\rm rank}(Y^*)\le n$. It follows from  \eqref{eq:equi01} that  ${\rm rank}(Y^*)=n$.
	According to Lemma \ref{lem:2}, $X^*=Y^*_{[1:n][n+1:n+p]}$ is an optimal solution of \eqref{eq:LPS-R}.
	Moreover, $X^*$ remains feasible to \eqref{eq:LP_s} by \eqref{eq:equi1}. Since $v$\eqref{eq:LPS-R} $\le v$\eqref{eq:LP_s}, we conclude that  $v\eqref{eq:LPS-R}=v\eqref{eq:LP_s}$ and then complete the proof by Lemma \ref{lem:2}.
\subsection*{Proof of Proposition \ref{le:beck09}}
	The number of linear constraints of  $\eqref{eq:LPS-SDP3}$ is
	\[
	m=n(n+1)/2+p(p+1)/2+k.
	\]
	By assumption \eqref{bd:k1}, we have
	$$
	m\le (n+1)(n+2)/2- 1.$$
	According to Lemma \ref{le:pataki}, ${\rm rank}(Y^e)\le n$ holds for all $Y^e\in \mathcal{E}(\mathcal{F}\eqref{eq:LPS-SDP3})$. Since the objective function is linear, \eqref{eq:LPS-SDP3} has an optimal solution in $ \mathcal{E}(\mathcal{F}\eqref{eq:LPS-SDP3})$. That is,  there exists at least one optimal solution of \eqref{eq:LPS-SDP3} whose rank is at most $n$.  By Proposition \ref{prop:1}, we complete the proof.
\subsection*{Proof of Corollary \ref{eq:cor}}
	Clearly, \eqref{eq:mm:h1} is an SDP relaxation for \eqref{minmax0} under transformation \eqref{eq:YX}. Suppose $\mathcal{F}\eqref{eq:LPS-SDP3}\neq \emptyset$, we have $\mathcal{F}\eqref{eq:mm:h1}\neq \emptyset$ by selecting $t$ large enough. Based on a proof similar to that of Lemma \ref{lem:2}, one can show that \eqref{eq:mm:h1} has an optimal solution. According to \eqref{eq:equi1}, it suffices to show that there exists an optimal solution of rank $n$.
	Let $(Y^*, t^*) \in\S^{n+p}\times\R$ be a global minimizer of \eqref{eq:mm:h1}. There is at least one index $q\in \{1, \cdots, m\}$ such that
	\[
	\TR(M_{q}Y^*_{[1:n][n+1:n+p]})+c_{q}= t^*\ge \TR(M_{i}Y^*_{[1:n][n+1:n+p]})+c_{i},~\forall\, i\in\{1, \cdots, m\}\setminus\{q\}.
	\]
	Then, $Y^*$ remains  an optimal solution of the following SDP problem
	\begin{equation}\label{eq:mm:h2}
		\begin{array}{cl}
			\min &
			\TR(M_qY_{[1:n][n+1:n+p]}) \\
			{\rm s.t.} &\TR(M_qY_{[1:n][n+1:n+p]})+c_q\ge \TR(M_jY_{[1:n][n+1:n+p]})+c_j,~\forall j\neq q,\\
			&Y\in \mathcal{F}\eqref{eq:LPS-SDP3},
		\end{array}
	\end{equation}
	which is a special  case of  \eqref{eq:LPS-SDP3} with $k+m-1$ linear constraints.
	According to Theorem \ref{th:1} and Remark \ref{th:rem:1}, under the assumption $p\le n-(k+m-1)$,  \eqref{eq:mm:h2} has an optimal solution of rank $n$. Since $v\eqref{eq:mm:h1}=v$\eqref{eq:mm:h2}, the rank-$n$ solution of \eqref{eq:mm:h2} is also optimal to \eqref{eq:mm:h1}. Thus, \eqref{eq:mm:h1} must have a rank-$n$ solution, which implies that \eqref{eq:mm:h1} is exact for \eqref{minmax0}.
	As presented by Example \ref{exam:6}, \eqref{eq:disp} and \eqref{eq:orth} are special cases of \eqref{minmax0}, then one can easily derive their specific exactness conditions.
\subsection*{Proof of Lemma \ref{le:sd}}
	For any $X \in E_{n,p,k}$, we have
	{\small	\begin{eqnarray}
			&&\TR(A_0X)\nonumber\\
			\ge\ &&\TR(A_0X)+\sum_{i=1}^k\left[\lambda_i^+ (\TR(A_iX)-a_i^u)-\lambda_i^- (\TR(A_iX)-a_i^l)\right]+\TR(\Lambda(X^{T}X-I_p))\label{eq:s_v0}\\
			=\ &&\TR(A_0X)+\sum_{i=1}^k\lambda_i \TR(A_iX)+\TR(\Lambda(X^{T}X-I_p))-\sum_{i=1}^k(\lambda_i^+a_i^u-\lambda_i^-a_i^l)\label{eq:s_v01}\\
			\ge\ &&\TR(A_0X^*)+\sum_{i=1}^k\lambda_i \TR(A_iX^*)+\TR(\Lambda(X^{*T}X^*-I_p))-\sum_{i=1}^k(\lambda_i^+a_i^u-\lambda_i^-a_i^l)\label{eq:s_v1}\\
			=\ &&\TR(A_0X^*)+\sum_{i=1}^k\left[\lambda_i^+ (\TR(A_iX^*)-a_i^u)-\lambda_i^- (\TR(A_iX^*)-a_i^l)\right]+\TR(\Lambda(X^{*T}X^*-I_p))\nonumber\\
			=\ &&\TR(A_0X^*),\label{eq:s_v2}
	\end{eqnarray}}
	where \eqref{eq:s_v0} holds due to the feasibility of $X$, inequality \eqref{eq:s_v1} holds since $X^*$  globally minimizes the unconstrained convex quadratic programming \eqref{eq:s_v01} as $\Lambda\succeq 0$ and
	\eqref{eq:kktY10}, the last equality \eqref{eq:s_v2} holds due to \eqref{eq:kkt_f0} and \eqref{eq:kktY02}.
	Therefore, $X^*$ is a global minimizer of \eqref{eq:LP_s}.
\subsection*{Proof of Lemma \ref{le:1}}
	Under the assumptions,   there exists $\lambda_i\in\R$ for $i=1, \cdots, k$ and $\Lambda\in\S^{p}$ such that KKT condition \eqref{eq:kktY10}-\eqref{eq:kktY02} holds.
	According to the second-order necessary optimality condition, for any $V\in\R^{n\times p}$ satisfying \eqref{eq:1second_v1} and \eqref{eq:1second_v2}, \eqref{eq:1second_v3} must hold.
	Note that $V\in\R^{n\times p}$ has $np$ entries and \eqref{eq:1second_v1}-\eqref{eq:1second_v2} contains at most  $k+p(p+1)/2$ linear constraints.
	Under assumption $p(p+1)/2+1\le n-k$, we have
	$$np-k- p(p+1)/2 \ge np-(n-1).$$
	Therefore, the set of $V$ satisfying \eqref{eq:1second_v1}-\eqref{eq:1second_v2} is a subspace with dimension no less than $np-(n-1).$
	Then, according to \eqref{eq:1second_v3}, $\Lambda\otimes I_n\in\R^{np\times np}$ has at most $n-1$ negative eigenvalues.
	On the other hand, if $\Lambda$ has at least one negative eigenvalue, then
	$\Lambda\otimes I_n$ has at least $n$ negative eigenvalues.  Thus, it follows that $\Lambda\succeq 0$. According to Lemma \ref{le:sd}, $X^*$ is  a global minimizer of \eqref{eq:LP_s}.
	The proof is complete.

\renewcommand\theequation{A.\arabic{equation}}
\renewcommand\thelemma{A.\arabic{lemma}}
\renewcommand\thetheorem{A.\arabic{theorem}}

\bibliographystyle{siamplain}
\bibliography{references}

\end{document}